\newcommand{\Q}{{\mathbb Q}}
\newcommand{\Z}{{\mathbb Z}}
\newcommand{\fa}{{\mathfrak a}}
\newcommand{\fb}{{\mathfrak b}}
\newcommand{\fc}{{\mathfrak c}}
\newcommand{\fm}{{\mathfrak m}}
\newcommand{\fp}{{\mathfrak p}}
\newcommand{\fq}{{\mathfrak q}}
\newcommand{\fP}{{\mathfrak P}}
\newcommand{\cO}{{\mathcal O}}
\newcommand{\cok}{{\operatorname{cok}\,}}
\newcommand{\Am}{{\operatorname{Am}}}
\newcommand{\st}{{\operatorname{st}}}
\newcommand{\Ram}{\mbox{\rm Ram}}
\newcommand{\Gal}{\mbox{\rm Gal}\,}
\newcommand{\Cl}{\operatorname{Cl}}
\newcommand{\im}{\operatorname{im}}
\newcommand{\Sel}{\operatorname{Sel}}
\newcommand{\eps}{\varepsilon}
\newcommand{\impl}{\Longrightarrow}
\newcommand{\llra}{\Longleftrightarrow}
\newcommand{\la}{\langle}
\newcommand{\ra}{\rangle}
\newcounter{lemmacount}[section]
\newtheorem{thm}[lemmacount]{Theorem}
\newtheorem{prop}[lemmacount]{Proposition}
\newtheorem{lem}[lemmacount]{Lemma}
\newtheorem{cor}[lemmacount]{Corollary}
\newtheorem{fact}[lemmacount]{Fact}
{}
{\bf}{\it}
{\bf}{}
\title{Scholz's Reciprocity Law in Number Fields}
\author{Franz Lemmermeyer}
\address{M\"orikeweg 1, 73489 Jagstzell, Germany}
\email{hb3@ix.urz.uni-heidelberg.de}
\begin{document}

\begin{abstract}
  In this article we prove a reciprocity law in number fields with odd
  class number that specializes to Scholz's reciprocity law over the
  rationals.
\end{abstract}

\maketitle

In this article we will generalize Scholz's reciprocity law
to number fields with odd class number (there are similar results
in general number fields, with unit groups replaced by Selmer groups,
but they cannot be formulated as smoothly as in the case of
odd class numbers).

\section{The Classical Version}
Before we explain the content of the generalization of Scholz's
reciprocity law, let us first discuss the classical version. 
Recall that the quadratic residue symbol $[\alpha/\fp]$ in a number
field takes the values $\pm 1$ whenever $\fp \nmid (\alpha)$, and is
determined by Euler's congruence
$[\alpha/\fp] \equiv \alpha^{(N\fp-1)/2} \bmod \fp$. If
$\alpha \equiv \xi^2 \bmod \fp$ is a quadratic residue, then we set
$[\alpha/\fp]_4 = [\xi/\fp]$.

Now let $p \equiv q \equiv 1 \bmod 4$ be positive primes. 
Let $\eps_p$ and $\eps_q$ denote the fundamental units of 
$\Q(\sqrt{p}\,)$ and $\Q(\sqrt{q}\,)$, respectively. If we
assume that $(p/q) = (q/p) = +1$, then $(p) = \fp\fp'$ and 
$(q) = \fq\fq'$ split in $\Q(\sqrt{q}\,)$ and $\Q(\sqrt{p}\,)$, 
respectively. It is easily checked that the quadratic residue 
symbol $[\eps_p/\fq]$ in $\Q(\sqrt{p}\,)$ does not depend on 
the choice of $\fq$, hence we may denote it by $(\eps_p/q)$.
The part of Scholz's theorem that we will generalize here is

\begin{thm}\label{TSRC}
  Let $k$ be a quadratic number field in which exactly the
  odd prime numbers $p$ and $q$ ramify. Let $h$ and $\eps$ denote
  the class number and the fundamental unit of $k$, respectively.
  Then $p \equiv q \bmod 4$, and there are the following cases:
  \begin{enumerate}
  \item[(a)] $p \equiv q \equiv 3 \bmod 4$: then $h$ is odd,
    and $N\eps = +1$.
  \item[(b)] $p \equiv q \equiv 1 \bmod 4$ and $(p/q) = -1$:
    Then $h \equiv 2 \bmod 4$ and $N\eps = -1$.
  \item[(c)] $p \equiv q \equiv 1 \bmod 4$ and $(p/q) = +1$:
    In this case, we have the reciprocity law
    $$ \Big(\frac{\eps_p}q\Big) = \Big(\frac{\eps_q}p\Big) $$
    due to Sch\"onemann and Scholz. In addition we have the following results:
    \begin{itemize}
    \item If $(\eps_p/q) = -1$, then  $h \equiv 2 \bmod 4$ and $N\eps = +1$.
    \item If $(\eps_p/q) = +1$, then  $h \equiv 0 \bmod 4$.
    \end{itemize}
  \end{enumerate}        
\end{thm}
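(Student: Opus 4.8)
The plan is to descend everything to the real quadratic field $k=\Q(\sqrt{pq})$ and run genus theory on its narrow class group; Scholz's reciprocity enters only through the norm of the fundamental unit. First, since $2$ is unramified, $\disc k$ is odd, so $k=\Q(\sqrt m)$ with $m\equiv1\bmod4$ squarefree; the reference to ``the fundamental unit $\eps$'' forces $k$ real, hence $m=pq$, $pq\equiv1\bmod4$, and $p\equiv q\bmod4$. Write $\Clp(k)$ for the narrow class group and $h^+$ for its order. Recall that $N\eps=-1$ iff $h^+=h$ and $N\eps=+1$ iff $h^+=2h$, and that genus theory gives $\rk_2\Clp(k)=1$ (only $p$ and $q$ ramify), so $h^+$ is even and the $2$-part of $\Clp(k)$ is cyclic. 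Also the narrow genus field of $k$ is $\Q(\sqrt{p^*},\sqrt{q^*})$, where $p^*=(-1)^{(p-1)/2}p$ and $q^*=(-1)^{(q-1)/2}q$ are the prime discriminants dividing $\disc k=pq$.

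Case (a): $p\equiv q\equiv3\bmod4$, so $p^*=-p$, $q^*=-q$ and the narrow genus field is the imaginary field $\Q(\sqrt{-p},\sqrt{-q})$. Hence the ordinary genus field of $k$ — the largest totally real field between $k$ and the narrow genus field — is $k$ itself, so $\rk_2\Cl(k)=0$ and $h$ is odd. Since $h^+$ is even while $h$ is odd, $h^+=2h$, forcing $N\eps=+1$. (One also sees $N\eps=+1$ at once: $\eps=\tfrac12(a+b\sqrt{pq})$ with $N\eps=-1$ would give $a^2\equiv-4\bmod q$, hence $q\equiv1\bmod4$.)

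Cases (b) and (c): $p\equiv q\equiv1\bmod4$, so $p^*=p$, $q^*=q$ and the narrow genus field $\Q(\sqrt p,\sqrt q)$ is totally real; then it is also the ordinary genus field, so $\rk_2\Cl(k)=1$ and $2\mid h$. For the $4$-rank I would use R\'edei--Reichardt: $\rk_4\Clp(k)=1-\rk_{\F_2}R$, where $R$ is the R\'edei matrix of $pq=p\cdot q$; since $(p/q)=(q/p)$, the matrix $R$ is all-ones if $(p/q)=-1$ and zero if $(p/q)=+1$. In case (b) ($(p/q)=-1$) this gives $\rk_4\Clp(k)=0$, so the $2$-part of $\Clp(k)$ is cyclic of order exactly $2$ and $h^+\equiv2\bmod4$; since $h$ is even and $h\mid h^+\mid 2h$, necessarily $h=h^+$, so $N\eps=-1$ and $h\equiv2\bmod4$. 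In case (c) ($(p/q)=+1$) it gives $\rk_4\Clp(k)=1$, so the $2$-part of $\Clp(k)$ is cyclic of order $2^a$ with $a\ge2$, i.e. $4\mid h^+$. Here I would invoke two facts from R\'edei's theory of governing fields: the norm criterion, $N\eps=+1$ exactly when $(p/q)=1$ and $(\eps_p/q)=-1$; and the fact that when $(\eps_p/q)=-1$ the narrow $2$-class group of $k$ is cyclic of order exactly $4$ (the R\'edei symbol detecting $8\mid h^+$ has $(\eps_p/q)$ as a factor). Then $(\eps_p/q)=-1$ yields $N\eps=+1$, $h=h^+/2$ and $h^+\equiv4\bmod8$, hence $h\equiv2\bmod4$; and $(\eps_p/q)=+1$ yields $N\eps=-1$, $h=h^+\equiv0\bmod4$. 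Note that the norm criterion alone already contains the Sch\"onemann--Scholz law: $N\eps$ depends only on $k$, so applying the criterion with $p,q$ interchanged gives $(\eps_p/q)=(\eps_q/p)$.

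The set-up, case (a), and the deduction in case (b) are routine genus theory; the weight of the argument sits in case (c), namely pinning down $N\eps$ and excluding an element of order $8$ in $\Clp(k)$ when $N\eps=+1$. I expect the main obstacle to be exactly the construction underlying both — R\'edei's cyclic quartic field $C_p/\Q$, unramified outside $p$, in which $q$ carries the Artin symbol $(\eps_p/q)$ — from which the symmetry that is Scholz's reciprocity, the $4$- and $8$-rank criteria, and the norm criterion all descend; carrying these out in detail (rather than citing R\'edei and Sch\"onemann--Scholz) is the real cost, and it is the classical shadow of the reciprocity law that this paper extends to number fields.
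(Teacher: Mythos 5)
Your set-up, case (a), and case (b) are fine as routine genus theory plus R\'edei--Reichardt, but case (c) rests on a ``norm criterion'' that is false as stated. You claim $N\eps=+1$ holds \emph{exactly} when $(p/q)=1$ and $(\eps_p/q)=-1$; only the implication $(\eps_p/q)=-1\Rightarrow N\eps=+1$ is true. For a counterexample to the other direction take $p=5$, $q=101$: here $(5/101)=(101/5)=+1$ and $(5/101)_4=(101/5)_4=+1$, so $(\eps_5/101)=(\eps_{101}/5)=+1$ (one checks directly that $\eps_5\equiv 23\bmod 101$ is a square), yet the fundamental unit of $\Q(\sqrt{505})$ is $809+36\sqrt{505}$ of norm $+1$ (the period of the continued fraction is even, and $a^2-505b^2=\pm4$ has no odd solutions since $505\equiv1\bmod 8$). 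This breaks two things at once: your derivation of the Sch\"onemann--Scholz law by ``symmetrizing'' the criterion collapses, and your treatment of the subcase $(\eps_p/q)=+1$ (where you conclude $N\eps=-1$, hence $h=h^+$) is wrong. The correct route to $4\mid h$ there must allow $N\eps=+1$ and then needs $8\mid h^+$, i.e.\ R\'edei's criterion $(p/q)_4=(q/p)_4=+1\Rightarrow 8\mid h^+$ combined with Scholz's identity $(\eps_p/q)=(p/q)_4(q/p)_4$ --- and note that the correct 4- and 8-rank criteria are symmetric expressions in quartic symbols, so no amount of citing them formally yields the reciprocity law $(\eps_p/q)=(\eps_q/p)$; that law (and the first bullet of case (c), which your ``criterion'' simply restates) is exactly the content that has to be proved.

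As you anticipate at the end, the real cost is the construction you defer: one needs the cyclic quartic (R\'edei/Scholz) extension, or equivalently the dihedral configuration over $\Q(\sqrt p,\sqrt q)$. That is also where this paper puts the weight: Theorem \ref{TSRC} is not reproved directly here (it is cited from the literature) but recovered as the case $F=\Q$ of Theorem \ref{Tmain}, whose proof uses Hilbert's first supplementary law, the ambiguous class number formula, and the explicit unramified extension $L=K(\sqrt{\Pi_1}\,)$ of $K=k_1k_2$ with $\Gal(L/F)\simeq D_4$, from which both directions of the reciprocity statement and the class-number divisibilities follow. Replacing your false norm criterion by that construction (or by a genuine proof of the R\'edei--Scholz facts you invoke) is what is missing.
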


For a proof and the history of the reciprocity law we refer to
\cite{LemRL}; Scholz's proof can be found in \cite{Sch}.

\section{Hilbert's First Supplementary Law}
Let $F$ be a number fields with odd class number, and denote its 
unit group $\cO_F^\times$ by $E = E_F$. For an ideal $\fa$ with 
odd norm we put $(E/\fa) = +1$ if and only if $(\eps/\fa) = +1$ 
for all units $\eps \in E$, where $(\eps/\fa)$ is the usual quadratic 
Jacobi symbol in $F$; if there is a unit with $(\eps/\fa) = -1$,
then we set $(E/\fa) = -1$. The ideal $\fa$ is called primary if 
and only if $(E/\fa) = +1$. 

We will say that an element $\alpha \in \cO_F$ with odd norm is 
$2$-primary if $\alpha \equiv \xi^2 \bmod 4$ for some $\xi \in \cO_F$,
and we will call it primary if it is $2$-primary and totally positive.
An element $\alpha$ is $2$-primary if and only if every ideal above 
$2$ is unramified in $F(\sqrt{\alpha}\,)/F$; it is primary if
in addition there is no ramification at the infinite primes.
The connection between primary ideals and primary elements is given 
by the first supplementary law of the quadratic reciprocity law in
number fields in the form given by Hilbert:

\begin{thm}[The First Supplementary Law]\label{Hil1SKa} 
Let $F$ be a number field with odd class number $h$, and suppose 
that $\fa$ is a squarefree ideal with odd norm. Then the following
assertions are equivalent:
\begin{enumerate}
\item[i)] $\fa$ is primary, i.e. $(\eps/\fa) = +1$ for all units
        $\eps \in E_F$;
\item[ii)] there exists an integral ideal $\fc$ and a primary 
       $\beta \in \cO_F$ such that $\fa\fc^2 = (\beta)$;
\item[iii)] there exists a primary $\alpha \in \cO_F$ such that 
       $\fa^h = (\alpha)$.
\item[iv)] there exists a quadratic extension $K/F$ ramified
          exactly at the primes $\fp \mid \fa$;
\end{enumerate}
Moreover, the extension $K/F$ in {\em iv)} is uniquely determined by $\fa$.
\end{thm}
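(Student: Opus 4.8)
The plan is to prove the theorem by establishing a cycle of implications among i)–iv), together with the final uniqueness claim. The natural order is iii) $\Rightarrow$ ii) $\Rightarrow$ i) $\Rightarrow$ iv) $\Rightarrow$ iii), since each of these steps uses a different piece of machinery and the chain closes up cleanly. The implication iii) $\Rightarrow$ ii) is essentially trivial: if $\fa^h=(\alpha)$ with $\alpha$ primary, then since $h$ is odd write $h=2m+1$, so that $\fa\cdot(\fa^m)^2=(\alpha)$, giving ii) with $\fc=\fa^m$ and $\beta=\alpha$. For ii) $\Rightarrow$ i): if $\fa\fc^2=(\beta)$ with $\beta$ primary, then for any unit $\eps$ the symbol $(\eps/\fa)=(\eps/\fa\fc^2)=(\eps/\beta)$ (the square $\fc^2$ contributes nothing), and $(\eps/\beta)=+1$ because $\beta$ is primary — here one uses that a primary element is in particular $2$-primary and totally positive, so by the quadratic reciprocity law in $F$ (Hilbert's form) the symbol $(\eps/\beta)$ equals a product of Hilbert symbols at the dyadic and infinite places, all of which vanish.

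**The two substantive implications.**

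The implication i) $\Rightarrow$ iv) is where the class field theory enters. Assume $\fa$ is primary. Consider the ray class group of $F$ modulo $\fm = 4\cdot\fa\cdot\infty$ (all infinite places), or rather the quotient governing quadratic extensions: I would look at $F^\times_{\fm}/(F^\times_{\fm})^2$ modulo global squares and principal ideals, i.e. the Galois group of the maximal elementary $2$-extension of $F$ unramified outside $\fa$ and tamely ramified (hence quadratically ramified) exactly at the primes dividing $\fa$. The obstruction to producing a quadratic extension ramified \emph{exactly} at $\fp\mid\fa$ is an element of a $2$-torsion group whose Pontryagin dual is detected by unit symbols $(\eps/\fa)$ and by the $2$-rank of the class group. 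Since $h$ is odd, the class group contributes nothing, so the only obstruction is the vanishing of $(\eps/\fa)$ for all $\eps\in E_F$ — which is precisely hypothesis i). This produces $K/F$. Finally iv) $\Rightarrow$ iii): given $K=F(\sqrt\mu)/F$ ramified exactly at $\fp\mid\fa$, the relative different forces $(\mu)$, after adjusting $\mu$ by squares, to be of the form $\fa\fb^2$ for some ideal $\fb$; raising to the $h$-th power and using that $\fa^h$ and $\fb^h$ are principal (odd class number), together with the fact that $K/F$ is unramified at $2$ and at infinity (so $\mu$ may be taken $2$-primary and totally positive up to squares and units, and the units are absorbed because we may multiply $\mu$ by a square of the generator of $\fa^h$), yields a primary $\alpha$ with $\fa^h=(\alpha)$. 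The place where care is genuinely needed is disentangling units: one must check that the generator of $\fa^h$ can be chosen primary, and this again comes down to i), closing the logical circle consistently.

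**Uniqueness.**

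For the uniqueness of $K/F$ in iv): two quadratic extensions $K_1=F(\sqrt{\mu_1})$ and $K_2=F(\sqrt{\mu_2})$ ramified at exactly the same set of finite primes and unramified everywhere else would give a quadratic extension $F(\sqrt{\mu_1\mu_2})$ unramified at \emph{all} finite primes and at infinity, hence everywhere; but $F$ has odd class number, so its maximal abelian everywhere-unramified extension — indeed its Hilbert class field — has odd degree, forcing $F(\sqrt{\mu_1\mu_2})=F$, i.e. $\mu_1\mu_2\in F^{\times 2}$ and $K_1=K_2$.

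**The main obstacle.**

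The step I expect to be the real work is i) $\Rightarrow$ iv): one must set up the right idelic or ray-class computation showing that, once the class group is killed by the odd-class-number hypothesis, the \emph{only} obstruction to realizing $\fa$ as the exact ramification locus of a quadratic extension is the primarity condition $(E/\fa)=+1$. This is a small piece of genus theory / ambiguous class number formula bookkeeping, and getting the bookkeeping of dyadic and infinite places exactly right — so that ``primary'' in the sense of $(E/\fa)=+1$ matches ``unramified at $2$ and $\infty$ after the appropriate twist'' — is the delicate point. Everything else is either formal or a standard consequence of the first supplementary law already quoted in Hilbert's form.
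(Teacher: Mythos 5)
Your steps iii) $\Rightarrow$ ii), ii) $\Rightarrow$ i) (your Hilbert-symbol computation is essentially the paper's use of the reciprocity law for primary elements), and the uniqueness argument are all sound and agree with the paper. The problem is that the actual content of the theorem is missing. The implication i) $\Rightarrow$ iv) -- which you yourself flag as ``the real work'' -- is only asserted, not proved: you claim that once the class group is odd the sole obstruction in the ray class group modulo $4\fa\infty$ is the vanishing of the symbols $(\eps/\fa)$, but no computation is given, and the assertion as stated skips the genuinely delicate point. A nontrivial quadratic subextension of the corresponding ray class field is merely unramified outside $\fa$ (and, with your modulus, possibly ramified at $2$ or $\infty$); nothing forces it to be ramified at \emph{every} prime dividing $\fa$. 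This is exactly why the paper introduces simple ideals: a primary ideal is factored into simple primary ideals, a parity argument for the ray class number $h_F\{\fa\} = h_F\Phi(\fa)/(E:E_\fa)$ (via a snake-lemma diagram) produces a quadratic extension unramified outside each simple factor, and simplicity forces ramification at all of its prime divisors; the ``exactly'' in iv) is then recovered from squarefreeness through the implication ii) $\Rightarrow$ iv). Equivalently, the paper cites the equivalence i) $\Leftrightarrow$ ii) from \cite{LSel}. None of this bookkeeping appears in your proposal, so the heart of the theorem is an unproved sketch.

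There is also a structural flaw in your cycle: in iv) $\Rightarrow$ iii) you concede that choosing a primary generator of $\fa^h$ ``comes down to i)''. Inside a cyclic proof that is not allowed -- iv) $\Rightarrow$ iii) must be proved from iv) alone. With what you have, one gets iii) $\Rightarrow$ ii) $\Rightarrow$ i) $\Rightarrow$ iv), but nothing is derived from iv) unconditionally, so iv) is never shown to imply the other statements and the four conditions are not proved equivalent. The paper avoids this entirely: from iv) one writes $K = F(\sqrt{\beta}\,)$ with $\beta$ coprime to $2$; since no prime above $2\infty$ ramifies, $\beta$ is primary by the very characterization of $2$-primary elements (unramifiedness above $2$) and positivity at the real places, and $(\beta) = \fa\fc^2$ because exactly the primes dividing $\fa$ ramify -- this is iv) $\Rightarrow$ ii) with no appeal to i); then iii) follows by the $h$-th power trick, $\fc^h = (\gamma)$ and $\alpha = \beta^h/\gamma^2$. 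Repairing your argument would mean either proving i) $\Rightarrow$ iv) honestly (simple ideals plus the ray-class parity computation, or the Selmer-group result of \cite{LSel}) and rerouting iv) $\Rightarrow$ iii) through the unramifiedness-at-$2\infty$ characterization rather than through i).
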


The heart of this theorem is the equivalence i) $\llra$ ii);
this was proved in \cite[Thm.~6.5]{LSel}; observe that since
$F$ has odd class number, the Selmer group $\Sel(F)$ is 
the unit group $E_F$ in this case.

We now prove that \ref{Hil1SKa}. ii), iii), and  iv) are equivalent.
\begin{itemize}
\item ii) $\impl$ iv): Consider $K = F(\sqrt{\beta}\,)$. Since
          $\beta$ is primary, $K/F$ is unramified at the primes 
          $2\infty$, and since $\fa$ is squarefree, $K/F$ is 
          ramified exactly at the primes $\fp \mid \fa$.
\item iv) $\impl$ ii): If $K/F$ is any quadratic extension ramified 
          exactly at the primes $\fp \mid \fa$, write 
          $K = F(\sqrt{\beta}\,)$ for some $\beta$ coprime to $2$. 
          Since no prime above $2\infty$ ramifies $\beta$ is 
          primary. Since exactly the primes $\fp \mid \fa$ ramify, 
          we must have $(\beta) = \fa\fc^2$ for some ideal $\fc$. 
\item iii) $\impl$ ii): put $\fa = \fp^{(h-1)/2}$.
\item ii) $\impl$ iii): But if $\fa \fc^2 = (\beta)$
          for some primary $\beta$, then $\fa^h \fc^{2h} = (\beta)^h$. 
          Write $\fc^h = (\gamma)$; then $\fa^h = (\alpha)$ for 
          $\alpha = \beta^h/\gamma^2$, and since $\beta$ is primary,
          so is $\alpha$.
\end{itemize}
The uniqueness of the extension $K/F$ is clear: if $K'/F$
also has these properties, the compositum $KK'/F$ contains
a quadratic subextension $L/F$ which is easily seen to be
unramified everywhere, contradicting our assumption that
the class number of $F$ be odd.

The following analog of Theorem \ref{Hil1SKa} can be proved
in a similar way:

\begin{thm}[The strict version of the First Supplementary Law]
\label{Hil1p} 
Let $F$ be a number field with odd class number $h^+$ in the strict 
sense, and suppose that $\fa$ is a squarefree ideal with odd norm. 
Then the following assertions are equivalent:
\begin{enumerate}
\item[i)] $(\eps/\fa) = +1$ for all units $\eps \in E_F^+$;
\item[ii)] there exists an integral ideal $\fc$ and a $2$-primary
            $\beta \in \cO_F$ such that $\fa\fc^2 = (\beta)$.
\item[iii)] There exists a $2$-primary $\alpha \in \cO_F$ 
            such that $\fa^{h^+} = (\alpha)$.
\item[iv)] There exists a quadratic extension $K/F$ ramified
           exactly at the primes $\fp \mid \fa$, and possibly
           at some infinite primes.
\end{enumerate}
Moreover, the extension $K/F$ in {\em iv)} is uniquely determined by $\fa$.
\end{thm}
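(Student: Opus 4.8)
The plan is to mirror the structure of the proof of Theorem~\ref{Hil1SKa}, replacing "odd class number" by "odd class number in the strict sense," "unit group $E_F$" by $E_F^+$ (totally positive units, or rather the relevant strict Selmer object), "primary" by "$2$-primary," and "unramified everywhere" by "unramified at all finite primes." As in the non-strict case, the true content is the equivalence i)~$\llra$~ii), and I would obtain it by invoking the strict analogue of \cite[Thm.~6.5]{LSel}: when $h^+$ is odd, the strict Selmer group $\Sel^+(F)$ coincides with $E_F^+$, so the computation of $\Sel^+$ that governs which ideals admit a $2$-primary generator up to squares specializes exactly to condition~i). The remaining implications ii)~$\impl$~iv), iv)~$\impl$~ii), iii)~$\impl$~ii), ii)~$\impl$~iii) go through verbatim, with every occurrence of "primary" weakened to "$2$-primary": for ii)~$\impl$~iv) one takes $K=F(\sqrt\beta)$ and notes that $2$-primarity of $\beta$ forces $K/F$ unramified at all primes above $2$, while ramification at infinite primes is now permitted; for iv)~$\impl$~ii) one writes $K=F(\sqrt\beta)$ with $\beta$ coprime to $2$, concludes $\beta$ is $2$-primary since no prime above $2$ ramifies, and reads off $(\beta)=\fa\fc^2$ from the finite ramification; iii)~$\impl$~ii) is again the substitution $\fa\mapsto\fp^{(h^+-1)/2}$; and ii)~$\impl$~iii) is the same Euclidean bookkeeping, $\fa\fc^2=(\beta)\Rightarrow\fa^{h^+}=(\beta^{h^+}/\gamma^2)$ with $\fc^{h^+}=(\gamma)$, the quotient remaining $2$-primary.

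For the uniqueness clause I would repeat the final paragraph of the proof of Theorem~\ref{Hil1SKa}, but the conclusion is now the right one for the strict setting: if $K'/F$ has the same finite ramification, then $KK'/F$ contains a quadratic subextension $L/F$ unramified at every \emph{finite} prime (it may ramify at infinity), which by strict class field theory would be cut out by a nontrivial element of the strict class group $\Cl^+(F)$ of order~$2$, contradicting that $h^+$ is odd. Hence $K=K'$.

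The main obstacle I anticipate is purely in the first step: pinning down that the relevant strict refinement of \cite[Thm.~6.5]{LSel} is available in the exact form needed, i.e. that with $h^+$ odd the strict Selmer group reduces to $E_F^+$ and that its triviality-of-$2$-torsion statement is precisely "$\fa\fc^2=(\beta)$ with $\beta$ $2$-primary $\iff (\eps/\fa)=+1$ for all $\eps\in E_F^+$." Everything downstream is a routine transcription; if the strict Selmer computation is not stated in \cite{LSel} in a directly citable form, the fallback is to run the short exact sequence argument of that paper with the archimedean places reinserted into the local conditions, which changes nothing structurally but must be checked place-by-place at infinity.
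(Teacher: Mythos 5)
Your proposal is correct and is exactly the argument the paper intends: the paper gives no separate proof of Theorem~\ref{Hil1p}, saying only that it ``can be proved in a similar way'' to Theorem~\ref{Hil1SKa}, and your transcription (strict Selmer/unit group $E_F^+$ for i)~$\llra$~ii), ``$2$-primary'' in place of ``primary'', infinite ramification permitted, and uniqueness via the strict class group $\Cl^+(F)$ having odd order) is precisely that adaptation.
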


We will also need the ambiguous class number formula. Recall that an ideal
  class $c$ in a cyclic extension $K/F$ is called ambiguous if it is
  fixed by $\Gal(K/F)$, and it is called strongly ambiguous if $c = [\fa]$
  for an ideal $\fa$ fixed by $\Gal(K/F)$. The ambiguous and strongly
  ambiguous ideal classes form groups $\Am(K/F)$ and $\Am_\st(K/F)$, and
  their cardinalities are given by the formulas (see \cite{LemA})
  \begin{equation}\label{ECam}
    \# \Am     = h_F \frac{\prod e(\fp)}{2(E_F:E_F \cap NK^\times)}, \quad
    \# \Am_\st = h_F \frac{\prod e(\fp)}{2(E_F:NE_K)}.
  \end{equation}
  where $e(\fp)$ is the ramification index of the prime $\fp$ in $F$
  (the product is over all primes $\fp$, finite and infinite).

\section{Scholz's Theorem}

Let $F$ be a number field with odd class number and $k/F$ a quadratic
extension with exactly two ramified primes: $\Ram(k/F) = \{\fp_1, \fp_2\}$.

\begin{fact}
  We have  $(E_F/\fp_1) =  (E_F/\fp_2)$.
\end{fact}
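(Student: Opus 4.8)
The plan is to reduce the asserted equality of symbols to a statement about individual units and then invoke the First Supplementary Law. By the very definition of $(E_F/\fp_i)$ it suffices to prove that $(\eps/\fp_1) = (\eps/\fp_2)$ for \emph{every} unit $\eps \in E_F$; for then $(\eps/\fp_1) = +1$ holds for all $\eps$ if and only if $(\eps/\fp_2) = +1$ holds for all $\eps$, which is precisely the claim $(E_F/\fp_1) = (E_F/\fp_2)$.

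To prove $(\eps/\fp_1) = (\eps/\fp_2)$ for all units I would apply Theorem \ref{Hil1SKa} to the ideal $\fa = \fp_1\fp_2$. This ideal is squarefree, since the two ramified primes are distinct, and it has odd norm, since neither $\fp_i$ lies above $2$ (otherwise $(E_F/\fp_i)$ would not be defined). The hypothesis $\Ram(k/F) = \{\fp_1,\fp_2\}$ says exactly that $k/F$ is a quadratic extension ramified at precisely the primes dividing $\fa$ and nowhere else, in particular unramified at the infinite places, so $k$ realizes condition iv) of Theorem \ref{Hil1SKa}. Therefore condition i) holds: $\fa$ is primary, i.e.\ $(\eps/\fa) = +1$ for every $\eps \in E_F$. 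Multiplicativity of the quadratic Jacobi symbol in its lower argument gives $(\eps/\fa) = (\eps/\fp_1)(\eps/\fp_2)$, and since each factor is $\pm1$ this forces $(\eps/\fp_1) = (\eps/\fp_2)$, which finishes the proof.

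There is essentially no obstacle once Theorem \ref{Hil1SKa} is available; the whole content has been absorbed into its equivalence i) $\llra$ iv), which ultimately rests on i) $\llra$ ii) from \cite[Thm.~6.5]{LSel}. If one prefers a self-contained argument, the same identity follows from the Hasse norm theorem together with Hilbert's reciprocity law: writing $k = F(\sqrt{d}\,)$ with $v_{\fp_i}(d)$ odd (possible since $\fp_i$ ramifies and lies above an odd prime), a unit $\eps$ is a local norm at every place unramified in $k/F$ — units are norms in unramified extensions of local fields, and the archimedean places are unramified by hypothesis — so the product formula $\prod_\fp (\eps,d)_\fp = 1$ collapses to $(\eps,d)_{\fp_1}(\eps,d)_{\fp_2} = 1$; and the tame symbol formula evaluates $(\eps,d)_{\fp_i}$, for the unit $\eps$, as $(\eps/\fp_i)^{v_{\fp_i}(d)} = (\eps/\fp_i)$, giving $(\eps/\fp_1) = (\eps/\fp_2)$ once more. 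The only mild point to check in this variant is the choice of $d$ with $v_{\fp_i}(d)$ odd, which is just the statement that $\fp_i$ ramifies in $k/F$.
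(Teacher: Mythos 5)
Your proof is correct and takes essentially the same route as the paper: the extension $k/F$ realizes condition iv) of Theorem \ref{Hil1SKa} for $\fa = \fp_1\fp_2$, so $\fp_1\fp_2$ is primary, and multiplicativity of the symbol forces $(\eps/\fp_1) = (\eps/\fp_2)$ for every unit $\eps \in E_F$, which is the claim. The supplementary Hilbert-symbol/product-formula argument you sketch is also sound, but it is an optional alternative rather than a different main approach.
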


\begin{proof}
  Since in $k$ only $\fp_1$ and $\fp_2$ ramify, the product $\fp_1\fp_2$
  must be primary in $F$, i.e., $(E_F/\fp_1\fp_2) = +1$.
  This implies the claim.
\end{proof}

The main theorem we will prove in this article the following:

\begin{thm}\label{Tmain}
  Let $F$ be a number field with odd class number, and let $k/F$ be a
  quadratic extension with exactly two ramified prime ideals $\fp_1$ and
  $\fp_2$. Let $h$ denote the class number of $k$.
  \begin{enumerate}
  \item If $\fp_1$ and $\fp_2$ are not primary, i.e., if
    $(E_F/\fp_1) = (E_F/\fp_2) = -1$, then $h$ is odd and $(E_F:NE_k) = 2$.
  \item If $\fp_1$ and $\fp_2$ are primary and $(\fp_1/\fp_2) = -1$,
    then $h \equiv 2 \bmod 4$ and $E_F = NE_k$.
    \item If $\fp_1$ and $\fp_2$ are primary and $(\fp_1/\fp_2) = +1$,
      then we have the reciprocity law $(E_1/\fp_2) = (E_2/\fp_1)$. In
      addition, we have the following results:
    \item If $(E_1/\fp_2) = -1$, then $h \equiv 2 \bmod 4$ and
      $(E_F:NE_k) = 2$.
    \item If $(E_1/\fp_2) = +1$, then $h \equiv 0 \bmod 4$.
  \end{enumerate}  
\end{thm}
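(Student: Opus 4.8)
\emph{Approach.} The whole theorem can be extracted from the ambiguous class number formula \eqref{ECam}, Theorem~\ref{Hil1SKa}, and one extra layer of quadratic extensions. Write $\Gal(k/F)=\langle\sigma\rangle$. First, $\sigma$ acts as $-1$ on the $2$-class group $\Cl_2(k)$: for $[\fb]\in\Cl_2(k)$ the class $[\fb]^{1+\sigma}$ is the image in $\Cl(k)$ of $[N_{k/F}\fb]\in\Cl(F)$, hence of odd order, hence trivial; so $\Cl_2(k)^\sigma=\Cl_2(k)[2]$, and $\Cl_2(k)$ is cyclic as soon as $\#\Am_2(k/F)=2$. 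Since exactly $\fp_1,\fp_2$ ramify (nothing at infinity), $\prod e(\fp)=4$ and $h_F$ is odd, so $(E_F:E_F\cap Nk^\times)$ and $(E_F:NE_k)$ are powers of $2$ dividing $2$, i.e.\ lie in $\{1,2\}$, and \eqref{ECam} gives $\#\Am(k/F),\#\Am_\st(k/F)\in\{h_F,2h_F\}$ with $\#\Am_\st\le\#\Am$. By Theorem~\ref{Hil1SKa} fix a primary $\delta$ with $(\delta)=\fp_1\fp_2\fc^2$, so $k=F(\sqrt\delta)$; as $\delta$ is primary the Hilbert product formula for $(\eps,\delta)$ has no contribution at $2$ or $\infty$, and the tame symbol gives $(\eps,\delta)_{\fp_i}=(\eps/\fp_i)$. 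Hence, by Hasse's norm theorem, a unit $\eps$ is a norm from $k$ iff $(\eps/\fp_1)=(\eps/\fp_2)=+1$; so $(E_F:E_F\cap Nk^\times)=1$ exactly when $\fp_1,\fp_2$ are primary. If they are not, this index equals $2$, so $\#\Am(k/F)=h_F$ is odd, whence $\Cl_2(k)[2]=\Cl_2(k)^\sigma=1$ and $\Cl_2(k)=1$, so $h$ is odd; and $NE_k\subsetneq E_F$ forces $(E_F:NE_k)=2$. This is case~(1).

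\emph{The genus tower.} Assume henceforth $\fp_1,\fp_2$ primary. Then $\#\Am(k/F)=2h_F$, so $\Cl_2(k)^\sigma=\Cl_2(k)[2]$ has order $2$ and $\Cl_2(k)$ is cyclic and nontrivial; in particular $2\mid h$. Pick primary $\beta_i$ with $(\beta_i)=\fp_i\fc_i^2$ (Theorem~\ref{Hil1SKa}) and set $F_i=F(\sqrt{\beta_i})$; the ambiguous-class computation above, applied to $F_i/F$ (one ramified prime, $\fp_i$ primary so every unit of $F$ is a norm), gives $\#\Am(F_i/F)=h_F$, hence $\Cl_2(F_i)=1$ and $h_{F_i}$ odd. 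The element $\beta_1\beta_2\delta^{-1}$ is primary and generates a principal square ideal, so $F(\sqrt{\beta_1\beta_2\delta^{-1}})/F$ is everywhere unramified; as $h_F$ is odd, this forces $\beta_1\beta_2\equiv\delta\bmod F^{\times 2}$. Consequently $\sqrt{\beta_2}\in F(\sqrt\delta,\sqrt{\beta_1})$, and $K:=k(\sqrt{\beta_1})=k(\sqrt{\beta_2})=F(\sqrt\delta,\sqrt{\beta_1})$ is a well-defined quadratic extension of $k$ whose three quadratic subfields over $F$ are $k,F_1,F_2$; it is unramified over $k$, because $(\beta_1)\cO_k=\fP_1^2(\fc_1\cO_k)^2$ is a square and $\beta_1$ remains primary over $k$ (this is the $2$-part of the genus field of $k/F$). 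The behaviour of $\fp_2$ in $F_1/F$ now enters: $\fp_2$ is unramified there and splits iff $(\beta_1/\fp_2)=+1$, i.e.\ iff $(\fp_1/\fp_2)=+1$.

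\emph{Finishing.} Since $K/k$ is unramified, class field theory gives $N_{K/k}\Cl_2(K)=\Cl_2(k)^2$ (the unique subgroup of index $2$, as $\Cl_2(k)$ is cyclic). A short local computation, using $\beta_1\delta^{-1}\equiv\beta_2^{-1}\bmod F^{\times 2}$, shows that $\fP_1$ (which has even valuation in $\beta_1$, hence is unramified in $K/k$) splits in $K$ iff $(\beta_2/\fp_1)=(\fp_2/\fp_1)=+1$, and Hilbert reciprocity for $(\beta_1,\beta_2)$ gives $(\fp_2/\fp_1)=(\fp_1/\fp_2)$. Now split by $(\fp_1/\fp_2)$. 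If $(\fp_1/\fp_2)=-1$: then $K/F_1$ has a \emph{single} ramified prime, so $\#\Am(K/F_1)=h_{F_1}$ is odd, $\Cl_2(K)=1$, hence $\Cl_2(k)^2=N_{K/k}\Cl_2(K)=1$ and $\Cl_2(k)\cong\Z/2$, so $h\equiv 2\bmod 4$; moreover $(\fp_2/\fp_1)=-1$, so $\fP_1$ does not split in $K$, which makes the strongly ambiguous $2$-classes fill $\Cl_2(k)[2]$, whence $\#\Am_\st(k/F)=\#\Am(k/F)$ and $E_F=NE_k$ --- this is case~(2). If $(\fp_1/\fp_2)=+1$: then $\fp_2=\fq\fq'$ in $F_1$ and $K/F_1$ is ramified exactly at $\fq,\fq'$; put $(E_1/\fp_2):=(E_{F_1}/\fq)=(E_{F_1}/\fq')$, legitimate since $h_{F_1}$ is odd. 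Applying the already proved case~(1) and the ``$2\mid h$'' half of the present argument to $K/F_1$ (no circularity, both came only from \eqref{ECam}): if $(E_1/\fp_2)=-1$ then $\fq$ is not primary in $F_1$, so $\Cl_2(K)=1$, so as before $\Cl_2(k)\cong\Z/2$ and $h\equiv 2\bmod 4$; and since $\fP_1$ \emph{does} split in $K$ here, the strongly ambiguous $2$-classes are trivial, so $\#\Am_\st(k/F)=h_F$ and $(E_F:NE_k)=2$ --- this is case~(4). If $(E_1/\fp_2)=+1$ then $\fq$ is primary in $F_1$, so Theorem~\ref{Hil1SKa} yields a primary $\gamma$ with $(\gamma)=\fq\fd^2$; as $\fq$ ramifies in $K/F_1$ and $\gamma$ remains primary over $K$, the extension $M:=K(\sqrt\gamma)$ is unramified over $K$ and $M\neq K$ ($\gamma$ has odd valuation at $\fq$), and $M/k$ is Galois because $\gamma^\sigma\gamma=N_{F_1/F}\gamma\equiv\beta_2\equiv\delta\beta_1\bmod F^{\times 2}$ lies in $K^{\times 2}$, so $\gamma^\sigma\equiv\gamma\bmod K^{\times 2}$. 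Thus $M/k$ is an everywhere unramified extension of degree $4$ with abelian Galois group; since $\Cl_2(k)$ is cyclic this group must be $\Z/4$, so $\Cl_2(k)$ surjects onto $\Z/4$ and $4\mid h$ --- this is case~(5). Finally this last paragraph is symmetric in the two indices (the field $K$ is the same), so $(E_1/\fp_2)=+1\Longleftrightarrow 4\mid h\Longleftrightarrow(E_2/\fp_1)=+1$, which is the reciprocity law $(E_1/\fp_2)=(E_2/\fp_1)$ of case~(3).

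\emph{Main obstacle.} The substance of the proof --- and its one genuinely delicate point --- is the reduction of the $4$-rank question for $\Cl(k)$ to a $2$-rank question for $K$ one level up, carried by the tower $F\subset F_1\subset K\supset k$. The linchpin is the congruence $\beta_1\beta_2\equiv\delta\bmod F^{\times 2}$: it is what makes $K=k(\sqrt{\beta_1})=k(\sqrt{\beta_2})$ and forces $\sqrt{\beta_2}\in K$, hence $M/k$ Galois, after which ``$\Cl_2(k)$ cyclic'' does the rest and no biquadratic residue symbol is ever computed. Everything else is bookkeeping with the tame Hilbert symbol and checking that primality is inherited under the base changes $F\to F_i$ and under $N_{F_1/F}$ --- in particular that the auxiliary extensions stay unramified at $2$ and $\infty$; this is routine but is exactly where sign errors lurk.
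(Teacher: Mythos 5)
Your argument is correct, and its skeleton is the one the paper uses --- the ambiguous class number formula \eqref{ECam}, primary generators from Theorem~\ref{Hil1SKa}, the genus extension $K/k$, and a second unramified layer to detect $4\mid h$ --- but several key steps are carried out by genuinely different means. For the unit indices you invoke Hasse's norm theorem together with the tame Hilbert symbols of $(\eps,\delta)$, so that $(E_F:E_F\cap Nk^\times)$ is read off directly from $(E_F/\fp_1),(E_F/\fp_2)$; the paper instead gets the parity statements of (1) from the existence or non-existence of the extension ramified only at $\fp_1$ plus Abhyankar's lemma. You make the genus field explicit through the congruence $\beta_1\beta_2\equiv\delta\bmod F^{\times 2}$ and pass from $\Cl_2(K)$ to $\Cl_2(k)$ via $N_{K/k}\Cl_2(K)=\Cl_2(k)^2$ (using that $\sigma$ inverts $\Cl_2(k)$, so cyclicity is automatic), whereas the paper argues with the Hilbert $2$-class field directly. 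The most substantive divergence is in part (3): the paper's Proposition transfers primarity from one side to the other through the $D_4$-structure of $L/F$, proving $(E_2/\fp_1)=+1\Rightarrow(E_1/\fp_2)=+1$ outright, while you prove both implications (4) and (5) for one index and obtain the reciprocity law as the equivalence $(E_1/\fp_2)=+1\Leftrightarrow 4\mid h\Leftrightarrow(E_2/\fp_1)=+1$ by symmetry --- this is in effect the route of the paper's later Theorem~\ref{TSRh}, and it buys you the class-number criterion as a by-product. Two small points deserve repair: the well-definedness $(E_{F_1}/\fq)=(E_{F_1}/\fq')$ follows from $\fp_2$ being primary in $F$ (via $(\eps/\fq)(\eps/\fq')=(N_{F_1/F}\eps/\fp_2)=+1$), not from $h_{F_1}$ being odd; and in showing $M\neq K$ you should also rule out $\gamma\equiv\beta_2\bmod F_1^{\times 2}$, which the odd valuation of $\gamma\beta_2^{-1}$ at $\fq'$ does immediately.
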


We begin by proving claim (1). This will follow from the following

\begin{fact}
  The class number $h(k)$ is odd if and only if $(E_F/\fp_1) = -1$.
  In this case we have $(E_F:NE_k) = 2$, and in fact
  $(E_F:E_F \cap Nk^\times) = 2$.
\end{fact}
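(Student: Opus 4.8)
The plan is to combine the ambiguous class number formula \eqref{ECam} with a local analysis of which units of $F$ are norms from $k$. First I would record that, by the preceding Fact, $\fp_1\fp_2$ is primary, so by Theorem \ref{Hil1SKa} the unique quadratic extension of $F$ ramified exactly at $\fp_1$ and $\fp_2$ — which must be $k$ — is unramified at $2$ and at all infinite primes. Hence $\prod_\fp e(\fp) = e(\fp_1)e(\fp_2) = 4$, and \eqref{ECam} reads $\#\Am(k/F) = 2h_F/(E_F : E_F \cap Nk^\times)$ and $\#\Am_\st(k/F) = 2h_F/(E_F : NE_k)$.

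The technical heart is the identification of $E_F \cap Nk^\times$. By the Hasse Norm Theorem for the cyclic extension $k/F$, a unit $\eps \in E_F$ lies in $Nk^\times$ iff it is a local norm at every prime of $F$. A global unit is automatically a local norm at every prime unramified in $k/F$ (for a finite unramified extension of local fields the norm is surjective on unit groups, and at the unramified infinite primes the condition is vacuous), so the only constraints come from $\fp_1$ and $\fp_2$. Since $\fp_1$ and $\fp_2$ have odd norm, the local extensions there are tamely ramified quadratic, and a standard computation with the norm residue symbol (see \cite{LemRL}) shows that a unit $u$ is a local norm at $\fp_i$ precisely when $(u/\fp_i) = +1$. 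Therefore $E_F \cap Nk^\times = \{\eps \in E_F : (\eps/\fp_1) = (\eps/\fp_2) = +1\}$; and since $\fp_1\fp_2$ is primary we have $(\eps/\fp_1) = (\eps/\fp_2)$ for every unit $\eps$, so in fact $E_F \cap Nk^\times$ is the kernel of the homomorphism $(\,\cdot\,/\fp_1)\colon E_F \to \{\pm1\}$. This kernel has index $2$ exactly when some unit satisfies $(\eps/\fp_1) = -1$, i.e. exactly when $(E_F/\fp_1) = -1$, and index $1$ otherwise.

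Now I would assemble the pieces. Plugging in, $\#\Am(k/F) = h_F$ when $(E_F/\fp_1) = -1$ and $\#\Am(k/F) = 2h_F$ otherwise; as $h_F$ is odd, $\#\Am(k/F)$ is odd precisely in the first case. Since $\Am(k/F)$ is a subgroup of $\Cl(k)$, its order divides $h(k)$, so $\#\Am(k/F)$ even implies $h(k)$ even; conversely, a nontrivial finite $2$-group carrying an involution has nontrivial fixed subgroup, so if the $2$-part of $\Cl(k)$ were nontrivial it would contribute a nontrivial $\Gal(k/F)$-fixed element, forcing $\#\Am(k/F)$ even. Hence $h(k)$ is odd iff $\#\Am(k/F)$ is odd iff $(E_F/\fp_1) = -1$, and in that case $(E_F : E_F \cap Nk^\times) = 2$.

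Finally, still assuming $h(k)$ odd, I note that $\Am_\st(k/F)$ is a subgroup of $\Am(k/F)$, whose order $h_F$ is odd, so $\#\Am_\st(k/F)$ is odd; the formula $\#\Am_\st(k/F) = 2h_F/(E_F:NE_k)$ then forces the $2$-adic valuation of $(E_F:NE_k)$ to be exactly $1$. Since $N_{k/F}\eps = \eps^2$ for $\eps \in E_F$ we have $E_F^2 \subseteq NE_k$, so $(E_F:NE_k)$ divides the power of $2$ $(E_F:E_F^2)$ and is therefore itself a power of $2$; combined with valuation $1$ this gives $(E_F:NE_k) = 2$. The main obstacle is the second step — pinning down the local norm condition at the ramified primes via the norm residue symbol and verifying that nothing is lost at the remaining primes, in particular the infinite ones; the rest is bookkeeping with \eqref{ECam} and elementary group theory.
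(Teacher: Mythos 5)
Your proof is correct, but it reaches the parity statement by a genuinely different route than the paper. The paper argues by constructing or excluding unramified quadratic extensions of $k$: if $(E_F/\fp_1)=+1$, the first supplementary law gives a primary generator $\pi_1$ with $\fp_1^{h_F}=(\pi_1)$, and Abhyankar's Lemma shows $k(\sqrt{\pi_1}\,)/k$ is unramified, so $h(k)$ is even; if $(E_F/\fp_1)=-1$, no quadratic extension of $F$ ramified exactly at $\fp_1$ exists, hence no unramified quadratic extension of $k$, and only afterwards is the ambiguous class number formula invoked to force both indices to equal $2$. You instead compute $E_F\cap Nk^\times$ outright, via the Hasse norm theorem and the tame norm-residue symbol at $\fp_1$ and $\fp_2$ (correctly using the primarity of $\fp_1\fp_2$ to get $(\eps/\fp_1)=(\eps/\fp_2)$ for each individual unit, not merely the equality of the collective symbols), and then read everything off (\ref{ECam}): $\#\Am(k/F)$ equals $h_F$ or $2h_F$ according as $(E_F/\fp_1)=-1$ or $+1$; the parity of $h(k)$ is tied to that of $\#\Am(k/F)$ by the subgroup relation in one direction and the fixed-point lemma for an involution acting on a nontrivial finite $2$-group in the other; and $(E_F:NE_k)=2$ follows because this index is a power of $2$ (as $E_F^2\subseteq NE_k$) whose $2$-adic valuation the formula pins to $1$. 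Your route buys a sharper intermediate fact --- $(E_F:E_F\cap Nk^\times)=2$ already follows from $(E_F/\fp_1)=-1$, before anything is known about $h(k)$ --- and it makes the converse direction, which the paper treats rather tersely, completely explicit; on the other hand, the paper's construction $k(\sqrt{\pi_1}\,)$ is exactly the tool reused in the proofs of parts (2) and (3) of the main theorem, so its argument meshes better with what follows. One small remark: your opening appeal to Theorem \ref{Hil1SKa} to see that $k/F$ is unramified at $2$ and at the infinite places is harmless but unnecessary, since that is precisely what the hypothesis $\Ram(k/F)=\{\fp_1,\fp_2\}$ asserts, and it is what justifies $\prod e(\fp)=4$ and the vacuousness of the local norm conditions away from $\fp_1$ and $\fp_2$.
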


\begin{proof}
  If $(E_F/\fp_1) = +1$, then $\fp_1$ is primary, hence $F(\sqrt{\pi_1}\,)$
  is a quadratic extension in which only $\fp_1$ is ramified. By Abhyankar's
  Lemma, the extension $k(\sqrt{\pi_1}\,)/k$ is unramified, and this
  shows that $h(k)$ is even.

  Conversely, if $(E_F/\fp_1) = -1$, there does not exist a quadratic
  extension of $F$ ramified exactly at $\fp$, hence there is no unramified
  quadratic extension of $k$, and therefore $k$ has odd class number.

  We now invoke the ambiguous class number formula. If the class number
  of $k$ is odd, then the ambiguous class number formula tells us that
  $$ \Am(k/F) = h_k \frac{2}{(E_F:E_F:E_F \cap NK^\times))}
      \quad \text{and} \quad
     \Am_\st(k/F) = h_k \frac{2}{(E_F:NE_k)} $$
  are both odd, which implies that
 $(E_F:E_F:E_F \cap NK^\times)) = (E_F:NE_k) = 2$.     
\end{proof}

Next we give the 
\begin{proof}[Proof of (2)]
  Since $\fp_1$ is primary, we can write $\fp_1^{h(F)} = (\pi_1)$ for some
  primary $\pi_1$; here $h(F)$ is the (odd) class number of $F$. Then
  $K = k(\sqrt{\pi_1}\,)$ is unramified over $k$, and $F_1 = F(\sqrt{\pi_1}\,)$
  has odd class number since only $\fp_1$ ramifies in $F_1/F$. Since
  $(\fp_1/\fp_2) = -1$, $\fp_2$ remains inert in $F_1/F$; thus exactly
  one prime ideal (namely $\fp_2$) ramifies in $K/F_1$, and the
  ambiguous class number formula implies that $h_K$ is odd. This in turn
  implies that $h_k \equiv 2 \bmod 4$.
  
  We now claim that the prime ideal $\fP_1$ in $k$ above $\fp_1$ has
  the property that $\fP_1^{h/2}$ is not principal; this implies that
  $\fp_1$ is an ambiguous ideal of order $2$ in $\Cl(k)$, hence
  $\Am_\st(k/F) = 2$, and the ambiguous ideal class then implies
  that $E_F = NE_k$. In fact, if $\fP_1^{h/2}$ is principal, then it splits
  in the Hilbert $2$-class field $K/k$; but this implies that $\fp_1$
  splits in $k_2/F$, hence $(\fp_2/\fp_1) = +1$ contradicting our
  assumption.
\end{proof}

From now on we assume in addition that $(\fp_1/\fp_2) = (\fp_2/\fp_1) = +1$,
i.e., that $\fp_1 = \fP_1\fP_1'$ splits in $k_2/F$ and that
$\fp_2 = \fP_2\fP_2'$ splits in $k_1/F$. We begin by showing that the
symbol $(E_1/\fP_1)$ does not depend on the choice of the conjugate:

\begin{fact}
  We have $(E_2/\fP_1) = (E_2/\fP_1')$, where $E_2$ is the unit group of $k_2$.
\end{fact}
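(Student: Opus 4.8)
The plan is to exploit the Galois action of $\Gal(k_2/F)$, which interchanges the two primes $\fP_1$ and $\fP_1'$ of $k_2$ lying above $\fp_1$. Write $\sigma$ for the nontrivial automorphism of $k_2/F$. Under our standing assumption $\fp_1 = \fP_1\fP_1'$ splits in $k_2/F$, and since $k_2/F$ is Galois we have $\fP_1' = \sigma\fP_1$; moreover $\sigma$ restricts to an automorphism of $\cO_{k_2}$ and hence of the unit group $E_2$.

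The first step is to record the functoriality of the quadratic residue symbol. For a field automorphism $\tau$ and a prime $\fp$ of odd norm with $\fp \nmid (\alpha)$, applying $\tau$ to Euler's congruence $(\alpha/\fp) \equiv \alpha^{(N\fp-1)/2} \bmod \fp$, and using that the symbol takes the values $\pm 1$, which are fixed by $\tau$, together with $N(\tau\fp) = N\fp$, yields $(\tau\alpha/\tau\fp) = (\alpha/\fp)$. Taking $\tau = \sigma$ and replacing $\alpha$ by $\sigma^{-1}\eta = \sigma\eta$ (as $\sigma^2 = 1$), we obtain, for every $\eta \in E_2$,
$$ \Big(\frac{\eta}{\fP_1'}\Big) = \Big(\frac{\eta}{\sigma\fP_1}\Big) = \Big(\frac{\sigma\eta}{\fP_1}\Big). $$

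The second step is to pass from this single-unit identity to the group symbol $(E_2/\cdot)$. Since $\eta \mapsto \sigma\eta$ is a bijection of $E_2$ onto itself, the displayed identity shows that the families $\{\,(\eta/\fP_1') : \eta \in E_2\,\}$ and $\{\,(\eta/\fP_1) : \eta \in E_2\,\}$ coincide; in particular the first consists only of $+1$'s exactly when the second does, which is precisely the assertion $(E_2/\fP_1) = (E_2/\fP_1')$.

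I do not expect a genuine obstacle here — this is only a bookkeeping lemma. The one point requiring a little care is the functoriality of the residue symbol under $\sigma$ (concretely, that conjugate primes have equal norm, which is immediate, and that $\sigma$ preserves $E_2$), together with the observation that, in contrast with the classical computation for a single fixed unit such as $\eps_p$ — where a sign $(N\eps_p/\fq)$ must be controlled — no correction term survives once one quantifies over the whole unit group $E_2$.
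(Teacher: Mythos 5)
Your argument is correct, but it is not the paper's argument. You prove the group-level statement directly: Galois equivariance of the quadratic residue symbol gives $(\eta/\fP_1') = (\sigma\eta/\fP_1)$, and since $\eta \mapsto \sigma\eta$ permutes $E_2$, the two families of values coincide, so $(E_2/\fP_1) = (E_2/\fP_1')$. Notably, this uses nothing about $\fp_1$ except that it splits in $k_2$; the primary hypothesis never enters. The paper instead proves the stronger, unit-by-unit identity $(\eps/\fP_1) = (\eps/\fP_1')$ for every single $\eps \in E_2$: starting from the same equivariance $(\eps/\fP_1') = (\eps'/\fP_1)$, it multiplies to get $(\eps/\fP_1)(\eps/\fP_1') = (\eps\eps'/\fP_1) = (N\eps/\fp_1)$, which equals $+1$ precisely because $\fp_1$ is primary. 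The trade-off is real: your route is more economical in hypotheses and suffices for the Fact as stated (and hence for the notation $(E_2/\fp_1)$), whereas the paper's computation delivers the pointwise independence of the symbol from the choice of conjugate prime, which is the form actually used later when individual symbols such as $[\eta_2/\fP_1]$ are taken as values of characters on $E_F$ and must not depend on choosing $\fP_1$ versus $\fP_1'$. In short: correct proof, genuinely different and slightly weaker conclusion, obtained under weaker assumptions.
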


\begin{proof}
  Let $\eps \in E_2$ be a unit; then $(\eps/\fP_1') = (\eps'/\fP_1)$, hence
  $(\eps/\fP_1)(\eps/\fP_1') = (\eps\eps'/\fP_1) = (N\eps/\fp_1) = +1$
  since $\fp_1$ is primary.
\end{proof}

We are therefore allowed to write $(E_2/\fp_1)$ instead of $(E_2/\fP_1)$.

\begin{prop}
  We have $(E_1/\fp_2) = (E_2/\fp_1)$, and there are two cases:
  \begin{itemize}
  \item $(E_1/\fp_2) = (E_2/\fp_1) = -1$: Then $h \equiv 2 \bmod 4$
    and $(E_F:NE_k) = 2$.
  \item $(E_1/\fp_2) = (E_2/\fp_1) = +1$: Then $4 \mid h(k)$.
  \end{itemize} 
\end{prop}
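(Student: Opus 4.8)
The plan is to establish the two equivalences
\[
  (E_2/\fp_1)=+1 \iff 4\mid h(k) \iff (E_1/\fp_2)=+1 ,
\]
from which the reciprocity law $(E_1/\fp_2)=(E_2/\fp_1)$ and the implication ``$+1\Rightarrow 4\mid h(k)$'' are immediate, the ``$-1$'' case then being a short deduction from the ambiguous class number formula. Throughout, write $\fp_i^{h(F)}=(\pi_i)$ with $\pi_i$ primary (Theorem~\ref{Hil1SKa}); then $k_i=F(\sqrt{\pi_i}\,)$ has odd class number, $\fp_2$ splits in $k_1$, $\fp_1=\fP_1\fP_1'$ splits in $k_2$, and, since $k$ and $F(\sqrt{\pi_1\pi_2}\,)$ are both quadratic extensions of $F$ ramified at exactly $\fp_1,\fp_2$, the uniqueness clause of Theorem~\ref{Hil1SKa} gives $k=F(\sqrt{\pi_1\pi_2}\,)$. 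Hence $M:=F(\sqrt{\pi_1},\sqrt{\pi_2}\,)=k(\sqrt{\pi_1}\,)=k(\sqrt{\pi_2}\,)$ is an unramified quadratic extension of $k$; in particular $h(k)$ is even, and by genus theory $\Cl_2(k)$ is cyclic with $M$ as its unique unramified quadratic subextension.

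For the direction $(E_2/\fp_1)=+1\Rightarrow 4\mid h(k)$: here $\fP_1$ is primary in $k_2$, so by Theorem~\ref{Hil1SKa} there is a primary $\beta\in\cO_{k_2}$ with $(\beta)=\fP_1\fb^2$. Taking the relative norm one checks that $N_{k_2/F}(\beta)$ is again primary and that its ideal is $\fp_1$ times a square; since $F$ has odd class number this forces $N_{k_2/F}(\beta)/\pi_1\in F^{\times 2}$, which makes $\sqrt{\beta'}\in M(\sqrt\beta\,)$ (where $\beta'$ is the conjugate of $\beta$ over $F$) and hence $N:=M(\sqrt\beta\,)=F(\sqrt{\pi_1},\sqrt{\pi_2},\sqrt\beta\,)$ normal over $k$ of degree $4$. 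Since $\beta$ is primary and $(\beta)$ becomes a square ideal in $M$, the extension $N/M$ is unramified (Abhyankar's Lemma); thus $N/k$ is unramified abelian of degree $4$ and $4\mid h(k)$.

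For the converse $4\mid h(k)\Rightarrow(E_2/\fp_1)=+1$: since $\Cl_2(k)$ is cyclic, $k$ has a unique unramified cyclic quartic extension $N_0\supseteq M$; by the uniqueness, $N_0$ is stable under a lift of the generator of $\Gal(M/k_2)$, so $N_0/k_2$ is normal of degree $4$. As $N_0/M$ is unramified while $M/k_2$ ramifies at $\fP_1$, the inertia group of $\fP_1$ in $\Gal(N_0/k_2)$ has order $2$ and cannot equal $\Gal(N_0/M)$; hence $\Gal(N_0/k_2)\cong(\Z/2)^2$, and an inspection of its three quadratic subextensions shows that one of them, $M''\neq M$, is ramified at exactly $\fP_1$ and nowhere else ($N_0/k_2$ being unramified outside $\fP_1\fP_1'$). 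By Theorem~\ref{Hil1SKa} the prime $\fP_1$ is then primary in $k_2$, i.e.\ $(E_2/\fp_1)=+1$. Swapping the roles of the indices gives the analogous statement for $(E_1/\fp_2)$, completing the equivalences, the reciprocity law, and part~(5).

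Finally, if $(E_1/\fp_2)=(E_2/\fp_1)=-1$ then $4\nmid h(k)$, so $h(k)\equiv 2\bmod 4$ and $\Cl_2(k)\cong\Z/2$. Writing $\fp_i\cO_k=\fB_i^2$, the prime $\fB_1$ splits in $M=k(\sqrt{\pi_2}\,)$ because $\pi_2$ is a square mod $\fp_1$, so $[\fB_1]\in\Cl(k)^2$ has odd order; since also $([\fB_1][\fB_2])^{h(F)}=[(\sqrt{\pi_1\pi_2}\,)]=1$, the class $[\fB_2]$ has odd order too, so $\Am_\st(k/F)=j(\Cl(F))\langle[\fB_1],[\fB_2]\rangle$ has odd order. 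Comparing this with $\#\Am_\st=2h(F)/(E_F:NE_k)$ (a special case of \eqref{ECam}), using that $h(F)$ is odd and $(E_F:NE_k)$ is a power of $2$, forces $(E_F:NE_k)=2$. The main obstacle is the pair of field‑theoretic constructions in the two middle paragraphs, with the attendant bookkeeping of (un)ramification at $2$, $\infty$, $\fP_1$, $\fP_1'$ and of the primality of $N_{k_2/F}(\beta)$, together with the genus‑theoretic input that $\Cl_2(k)$ is cyclic; once these are in place the rest is formal.
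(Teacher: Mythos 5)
Your proof is correct, but it reaches the conclusion by a route that differs from the paper's proof of this Proposition. The paper argues in one direction only: assuming $(E_2/\fp_1)=+1$ it adjoins a square root of a primary generator attached to $\fP_1$ to $K=k_1k_2$, gets an unramified quadratic extension $L/K$, concludes $4\mid h(k)$, and then reads $(E_1/\fp_2)=+1$ directly off the asserted isomorphism $\Gal(L/F)\simeq D_4$ (the quadratic subextensions of $L/k_1$ being ramified exactly at $\fP_2$ resp.\ $\fP_2'$); symmetry is never needed for the reciprocity law itself. You instead establish the two equivalences $(E_i/\fp_j)=+1\iff 4\mid h(k)$. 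Your forward construction is essentially the paper's, except that you verify $N_{k_2/F}(\beta)\equiv\pi_1\bmod F^{\times 2}$ in order to get normality (hence, in degree $4$, abelianness) of $N/k$ — this step is fine, but note it rests on the observation that a primary unit of $F$ must be a square because $h(F)$ is odd, which deserves to be said explicitly; the paper avoids it by asserting the $D_4$-structure over $F$ instead. Your converse, via the unique unramified cyclic quartic extension $N_0$ of $k$ (using cyclicity of $\Cl_2(k)$) and the inertia computation in $\Gal(N_0/k_2)\cong(\Z/2)^2$, does not appear in the paper's proof of this Proposition, but it coincides in substance with the paper's later proof of Theorem \ref{TSRh}, iii) $\Rightarrow$ i), carried out there over $F$ with a dihedral group; what your symmetric formulation buys is that the reciprocity law and statement (5) drop out without ever identifying $\Gal(L/F)$, at the cost of invoking genus theory for the cyclicity of $\Cl_2(k)$. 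One point to tighten: in the ``inspection'' of the three quadratic subextensions of $N_0/k_2$ you must also use that $k_2$ has odd class number to rule out the inertia groups of $\fP_1$ and $\fP_1'$ coinciding (otherwise their common fixed field would be an everywhere unramified quadratic extension of $k_2$); the parenthetical ``unramified outside $\fP_1\fP_1'$'' alone does not force the existence of a subextension ramified exactly at $\fP_1$. Your treatment of the $-1$ case (splitting of $\fB_1$ in the Hilbert $2$-class field, oddness of $\Am_\st$, then the ambiguous class number formula) is the same as the paper's, with the bookkeeping for $[\fB_2]$ made explicit.
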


\begin{proof}
  If $(E_2/\fP_1) = +1$, then $\fP_1$ is primary, hence $\fP_1 = (\Pi_1)$
  for some primary element $\Pi_1$. Thus te extensions
  $k_2(\sqrt{\Pi_1}\,)/k_1$ and  $k_2(\sqrt{\Pi_1'}\,)/k_1$ are exactly
  ramified at $\fP_1$ and $\fP_1'$, respectively, hence $L = K(\sqrt{\Pi_1}\,)$
  is an unramified quadratic extension of $K$. This implies in particular
  that $h(K)$ is even and that $h(k)$ is divisible by $4$.

  Since $\Gal(L/F) \simeq D_4$, the extension $L/k_1$ is elementary abelian.
  Therefore there exist quadratic extensions $K_1/k_1$ and $K_1'/k_1$
  that are necessarily ramified at $\fP_2$ and $\fP_2'$, respectively.
  This implies that $\fP_2$ is primary, hence $(E_1/\fP_2) = +1$.

  The prime ideal $\fP_1$ in $k$ above $\fp_1$ splits in the Hilbert
  class field $K$ of $k$, hence it must be principal. This implies
  that $\Am_\st(k/F) = 1$, which in turn shows that $(E_F:NE_k) = 2$.
\end{proof}

This completes the proof of Thm.~\ref{Tmain}.
In the special case $F = \Q$ we recover Thm.~\ref{TSRC}. We will now give
two more examples of explicit versions of Thm.~\ref{Tmain}:

\begin{thm}\label{SRLi}
Let $\pi = a + bi$ and $\rho = c + di$ be primes in $\Z[i]$
with $\pi \equiv \rho \equiv 1 \bmod 4$; then $p = N\pi$ and 
$q = N\rho$ are primes congruent to $1 \bmod 8$. 
Let $h$ denote the $2$-class number of $K = k(\sqrt{\pi\rho}\,)$ and
$\eta$ its fundamental unit, and let $\eps_\pi$ and $\eps_\rho$ denote
the fundamental units of $\Q(i,\sqrt{\pi}\,)$ and $\Q(i,\sqrt{\rho}\,)$,
respectively.
\begin{enumerate}
\item If $[\pi/\rho] = -1$, then $h = 2$ and $N\eta = \pm i$.
\item If $[\pi/\rho] = +1$, then we have Scholz's reciprocity law
      $$ \Big(\frac{\eps_\pi}\rho\Big) = \Big(\frac{\eps_\rho}\pi\Big), $$
      and there are the following cases:
      \begin{enumerate} 
\item If $(\frac{\eps_\pi}\rho) = -1$, then 
      $h \equiv 2 \bmod 4$ and $N\eta = \pm 1$.
\item If $(\frac{\eps_\pi}\rho) = +1$, then $4 \mid h$.  
      \end{enumerate}      
\end{enumerate}
\end{thm}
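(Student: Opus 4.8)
The plan is to derive Theorem~\ref{SRLi} as the special case $F = \Q(i)$ of Theorem~\ref{Tmain}, after checking that all the hypotheses of the general theorem are met and translating its conclusions into the concrete language of units and power residue symbols over $\Z[i]$. First I would record the elementary facts: since $\pi \equiv 1 \bmod 4$ in $\Z[i]$ and $\pi$ is primary, the rational prime $p = N\pi$ splits in $\Q(i)/\Q$ and, because $\pi$ is primary, $p \equiv 1 \bmod 8$ (the condition $\pi \equiv 1 \bmod (1+i)^3$ forces $N\pi \equiv 1 \bmod 8$); similarly for $\rho$ and $q$. The field $F = \Q(i)$ has class number $1$, hence odd class number, so Theorem~\ref{Tmain} applies to any quadratic extension $k = F(\sqrt{\pi\rho}\,)$ with exactly two ramified primes. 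The two ramified primes are $\fp_1 = (\pi)$ and $\fp_2 = (\rho)$ (no ramification at $(1+i)$ or at infinity, since $\pi, \rho$ are primary and $\Q(i)$ has no real places), so $\Ram(k/F) = \{(\pi),(\rho)\}$ exactly.

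Next I would pin down which case of Theorem~\ref{Tmain} we are in. The key reduction is that $(\pi)$ and $(\rho)$ are \emph{always} primary in the sense of the paper, i.e., $(E_F/(\pi)) = (E_F/(\rho)) = +1$: the unit group of $\Z[i]$ is $\langle i\rangle$, and since $\pi \equiv 1 \bmod 4$ the quartic residue symbol $[i/\pi] = i^{(N\pi - 1)/4}$; but $N\pi \equiv 1 \bmod 8$ means $(N\pi-1)/4$ is even, so $[i/\pi] = 1$, and in particular the quadratic symbol $(i/\pi) = +1$. Hence $(E_F/(\pi)) = +1$, and likewise for $\rho$. This means case~(1) of Theorem~\ref{Tmain} never occurs here, and we are always in case~(2) or case~(3)--(5). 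The dichotomy between them is governed by $(\fp_1/\fp_2) = ((\pi)/(\rho))$, which — again because $N\pi \equiv 1 \bmod 8$ so that $(-1/\rho)$ and the $(1+i)$-contribution are trivial — coincides with the quartic-symbol condition $[\pi/\rho]$ (up to a sign one checks using the reciprocity law in $\Z[i]$, so that $[\pi/\rho] = ((\pi)/(\rho))$ as quadratic symbols in $F$). So $[\pi/\rho] = -1$ lands us in case~(2) of Theorem~\ref{Tmain}, giving $h(k) \equiv 2 \bmod 4$ and $E_F = NE_k$; and $[\pi/\rho] = +1$ lands us in cases~(3)--(5).

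With the case analysis in place, the remaining work is translation. In the case $[\pi/\rho] = -1$: Theorem~\ref{Tmain}(2) gives $h(k) \equiv 2 \bmod 4$. For the claim $h = h(K) = 2$ where $K = k(\sqrt{\pi\rho}\,)$ — wait, here $K$ is the field $k(\sqrt{\pi\rho}\,)$ itself, so I should read the statement as: $k$ in the theorem is $\Q(i)$ and $K = \Q(i,\sqrt{\pi\rho}\,)$ is what Theorem~\ref{Tmain} calls $k$; so $h$ in Theorem~\ref{SRLi} is the $2$-class number of this quadratic extension, and Theorem~\ref{Tmain}(2) already gives $h \equiv 2 \bmod 4$. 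That it equals exactly $2$ (not a higher multiple of $2$ with odd part, but the full $2$-class number is $2$) follows because $k/F$ has genus field of degree $2$ over $k$ — the only unramified quadratic extension is $k_1 k_2$ where $k_j = F(\sqrt{\pi_j}\,)$, wait no — I would argue that the $2$-rank of $\Cl(K)$ equals the number of ramified primes minus $1 = 1$, and combined with $4 \nmid h$ this forces $h = 2$. Then $E_F = NE_k$ translates, via Hasse's norm theorem and the structure of $E_F = \langle i \rangle$, into a statement about whether $i$ is a norm from $K$; one checks that $i = N\eta$ is impossible precisely when... actually the conclusion $N\eta = \pm i$ should come from: $E_F = NE_K$ means $i \in NE_K$, and since $i$ itself is not a square the fundamental unit $\eta$ must satisfy $N_{K/\Q(i)}\eta \in \{i, -i\}$ (it cannot be $\pm 1$ else $NE_K$ would be too small). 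For $[\pi/\rho] = +1$: the reciprocity law $(E_1/\fp_2) = (E_2/\fp_1)$ of Theorem~\ref{Tmain}(3), with $E_1$ the units of $k_1 = \Q(i,\sqrt\pi\,)$, says $(\eps_\pi/(\rho)) = (\eps_\rho/(\pi))$ as quadratic symbols, which is exactly Scholz's reciprocity law as stated; then (4) and (5) give the two sub-cases directly, with the norm condition $N\eta = \pm 1$ in sub-case (a) again coming from $(E_F : NE_k) = 2$, i.e.\ $i \notin NE_K$, forcing $N\eta \in \{1,-1\}$.

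The main obstacle I expect is the fine bookkeeping in the translation step: (i) confirming that the paper's abstract symbol $(E_F/\fa)$ over $F = \Q(i)$ really is controlled by the single quartic-residue computation $[i/\pi] = i^{(N\pi-1)/4}$, which hinges on the congruence $N\pi \equiv 1 \bmod 8$ and hence on the precise meaning of $\pi \equiv 1 \bmod 4$ in $\Z[i]$ versus $\bmod (1+i)^3$; (ii) matching $(\fp_1/\fp_2)$ in $F$ with the classical quartic symbol $[\pi/\rho]$, which requires invoking quartic reciprocity in $\Z[i]$ to dispose of the rational and ramified-prime correction factors; and (iii) converting the index statements $(E_F : NE_k) \in \{1, 2\}$ into the clean assertions about $N\eta$, which requires knowing that $\eta$ generates $E_K$ modulo roots of unity and torsion, and a short Hasse-principle argument that $i$ is a local norm everywhere iff it is a global norm. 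None of these is deep, but all must be stated carefully; I would isolate (i) and (ii) as a preliminary lemma ("for primary primes $\pi, \rho$ in $\Z[i]$, $(E_{\Q(i)}/(\pi)) = +1$ and $((\pi)/(\rho)) = [\pi/\rho]$") so that the rest of the proof reads as a direct citation of Theorem~\ref{Tmain}.
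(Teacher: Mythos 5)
Your proposal is correct and is essentially the paper's own route: the paper states Theorem~\ref{SRLi} as a direct specialization of Theorem~\ref{Tmain} to $F=\Q(i)$ with $\fp_1=(\pi)$, $\fp_2=(\rho)$, and your verification that the hypotheses hold (odd class number, exactly the two primes $(\pi),(\rho)$ ramified since $\pi\rho\equiv 1\bmod 4$, both primes primary because $N\pi\equiv N\rho\equiv 1\bmod 8$ forces $(i/\pi)=(i/\rho)=+1$) together with the translation of $E_F=NE_k$ versus $(E_F:NE_k)=2$ into $N\eta=\pm i$ versus $N\eta=\pm 1$ is exactly what is needed. Two tiny bookkeeping points worth making explicit: the paper's $[\pi/\rho]$ is the quadratic residue symbol in $\Z[i]$ (no quartic reciprocity is needed, since $(\fp_1/\fp_2)=(\pi/\rho)$ directly), and the identification $(E_1/\fp_2)=(\eps_\pi/\rho)$ uses that the torsion unit contributes trivially, i.e.\ $(i/\fP_2)=+1$ because $N\fP_2=q\equiv 1\bmod 8$.
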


In $\Q(\sqrt{-2}\,)$, the theorem becomes

\begin{thm}\label{SRL-2}
  Let $p \equiv q \equiv 1 \bmod 8$ be primes, and let
  $\pi = a + b\sqrt{-2}$ and $\rho = c + d\sqrt{-2}$ be primes
  in $\Z[i]$ with $a+b \equiv c+d \equiv 1 \bmod 4$ and $p = N\pi$
  and $q = N\rho$. Let $h$ denote the $2$-class number of
  $K = k(\sqrt{\pi\rho}\,)$ and $\eta$ its fundamental unit, and
  let $\eps_\pi$ and $\eps_\rho$ denote the fundamental units of
  $\Q(\sqrt{-2},\sqrt{\pi}\,)$ and $\Q(\sqrt{-2},\sqrt{\rho}\,)$,
  respectively.  
\begin{enumerate}
\item If $[\pi/\rho] = -1$, then $h = 2$ and $N\eta = -1$.
\item If $[\pi/\rho] = +1$, then we have Scholz's reciprocity law
  $$ \Big(\frac{\eps_\pi}\rho\Big) = \Big(\frac{\eps_\rho}\pi\Big), $$
  and there are the following cases:
   \begin{enumerate} 
   \item If $(\frac{\eps_\pi}\rho) = -1$, then $h \equiv 2 \bmod 4$
     and $N\eta = 1$.
\item If $(\frac{\eps_\pi}\rho) = +1$, then $4 \mid h$.
   \end{enumerate}   
\end{enumerate}
\end{thm}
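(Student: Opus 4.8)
Write $F = \Q(\sqrt{-2}\,)$. The plan is to obtain Theorem~\ref{SRL-2} as the special case of Theorem~\ref{Tmain} in which the base field is $F$; apart from one point, discussed at the end, the work is then a translation of notation together with a check of hypotheses. Since $\Z[\sqrt{-2}\,]$ is a principal ideal domain, $F$ has class number $1$ and $E_F = \{\pm 1\}$, so an ideal of $F$ is primary precisely when it has a primary generator. From $p \equiv q \equiv 1 \bmod 8$ one gets $(-2/p) = (-2/q) = +1$, so $p$ and $q$ split in $F$, which is where the primes $\pi = a + b\sqrt{-2}$ and $\rho = c + d\sqrt{-2}$ come from. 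Computing the squares modulo $4$ of the elements of $\Z[\sqrt{-2}\,]$ coprime to $2$, one checks that when $N\pi = p \equiv 1 \bmod 8$ (so $a$ is odd and $b$ even) the normalisation $a + b \equiv 1 \bmod 4$ is equivalent to $\pi$ being $2$-primary; as $F$ is totally imaginary, $\pi$, and likewise $\rho$, is then a primary element. Hence $\fp_1 = (\pi)$ and $\fp_2 = (\rho)$ are primary ideals, $\pi\rho$ is a squarefree primary element, and $K := F(\sqrt{\pi\rho}\,)$ is a quadratic extension of $F$ ramified exactly at $\fp_1$ and $\fp_2$. In particular the non-primary case~(1) of Theorem~\ref{Tmain} does not occur, and we are in one of the cases~(2)--(5).

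Next I would apply Theorem~\ref{Tmain} to $K/F$, the fields called $k_1$ and $k_2$ there being $F(\sqrt{\pi}\,) = \Q(\sqrt{-2},\sqrt{\pi}\,)$ and $F(\sqrt{\rho}\,) = \Q(\sqrt{-2},\sqrt{\rho}\,)$ (these are the right fields because the normalisation makes $\pi,\rho$ themselves primary). The class number $h_K$ of $K$ has $2$-part equal to the integer $h$ of Theorem~\ref{SRL-2}, so $h_K \equiv 2 \bmod 4$ becomes $h = 2$ and $h_K \equiv 0 \bmod 4$ becomes $4 \mid h$. The symbol $(\fp_1/\fp_2)$ is the quadratic residue symbol $[\pi/\rho]$; by Hilbert's reciprocity law its local contributions at $2$ and at infinity vanish for the primary elements $\pi,\rho$, so $[\pi/\rho] = [\rho/\pi]$, and accordingly $[\pi/\rho] = -1$ puts us in case~(2) while $[\pi/\rho] = +1$ puts us in cases~(3)--(5). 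In the latter situation $\rho$ splits in $F(\sqrt{\pi}\,)/F$; a prime $\fP_2$ of $F(\sqrt{\pi}\,)$ above $\rho$ then has residue degree $1$, the roots of unity of $F(\sqrt{\pi}\,)$ (just $\pm 1$, by the last paragraph) have symbol $+1$ at $\fP_2$ because $q \equiv 1 \bmod 4$, and $E_1 = \la -1, \eps_\pi\ra$; hence $(E_1/\fp_2) = (\eps_\pi/\rho)$, and symmetrically $(E_2/\fp_1) = (\eps_\rho/\pi)$. Thus case~(3) is Scholz's reciprocity law $(\eps_\pi/\rho) = (\eps_\rho/\pi)$, and cases~(4) and~(5) yield parts~(2)(a) and~(2)(b) of Theorem~\ref{SRL-2}.

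It remains to convert the index $(E_F : NE_K)$ into the statement about $N\eta := N_{K/F}(\eta)$, where $\eta$ is a fundamental unit of $K$. Since $K$ is totally imaginary of degree $4$ over $\Q$, its unit group has rank $1$, so $E_K = \mu_K \cdot \la\eta\ra$; as $N_{K/F}(\zeta) = \zeta^2 = 1$ for $\zeta \in \{\pm 1\}$, the norm group $NE_K$ is generated by $N\eta$, which lies in $E_F = \{\pm 1\}$, whence $NE_K = E_F$ iff $N\eta = -1$ and $NE_K = \{1\}$ iff $N\eta = +1$. Feeding this back: $E_F = NE_K$ in case~(2) forces $N\eta = -1$, which is part~(1); $(E_F : NE_K) = 2$ in case~(4) forces $N\eta = +1$, which is part~(2)(a); and case~(5) is part~(2)(b), with no constraint on $N\eta$.

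The only step that is not pure bookkeeping --- and the one I expect to cost some effort --- is the input $\mu_K = \{\pm 1\}$ (and likewise for $F(\sqrt{\pi}\,)$ and $F(\sqrt{\rho}\,)$): without it the identification of $NE_K$ with $\la N\eta\ra$ fails, since e.g.\ a primitive fourth root of unity in $K$ would put $-1$ into $NE_K$ unconditionally. The only degree-$4$ fields over $\Q$ of the form $F(\zeta)$ with $\zeta$ a root of unity not in $\{\pm 1\}$ are $\Q(\zeta_8)$ and $F(\zeta_3\,)$; but if $K$ were either of these then $\sqrt{\pi\rho} \in K$, and applying complex conjugation would give $\sqrt{\,\overline{\pi\rho}\,} \in K$ and hence $\sqrt{pq} \in K$, which is impossible because the real quadratic subfields of $\Q(\zeta_8)$ and $F(\zeta_3\,)$ are $\Q(\sqrt{2}\,)$ and $\Q(\sqrt{6}\,)$, whereas $pq$ is a positive odd non-square. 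The same argument with $\pi$ (resp.\ $\rho$) in place of $\pi\rho$ handles $F(\sqrt{\pi}\,)$ (resp.\ $F(\sqrt{\rho}\,)$), and with $\mu$ under control the rest is the translation described above; one can also cross-check the four cases numerically for small $p,q$ with $K$ written out explicitly.
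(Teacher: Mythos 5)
Your proposal is correct and follows essentially the paper's own route: the paper presents Theorem~\ref{SRL-2} simply as the explicit form of Theorem~\ref{Tmain} for $F=\Q(\sqrt{-2}\,)$, and your argument is exactly that specialization. The verifications you supply (the primarity criterion $a+b\equiv 1\bmod 4$, the identification $(E_1/\fp_2)=(\eps_\pi/\rho)$, the translation of $(E_F:NE_K)$ into the value of $N\eta$, and the exclusion of extra roots of unity in $K$, $k_1$, $k_2$) are precisely the bookkeeping the paper leaves to the reader.
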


\section{The Strong Form of Scholz's Reciprocity Law}
The classical version of Scholz's theorem is more precise than the theorem
we have proved above. In the case $F = \Q$, the reciprocity law given in
Thm.~\ref{TSRC} can be improved to
$$ \Big(\frac{\eps_p}q\Big) = \Big(\frac{\eps_q}p\Big)
   = \Big(\frac pq\Big)_4 \Big(\frac qp\Big)_4, $$
and the claim that $4 \mid h$ if $(\eps_p/q) = +1$ can be made more
precise as follows:
\begin{itemize}
\item If $(p/q)_4 = (q/p)_4 = -1$, then $h \equiv 4 \bmod 8$ and $N\eps = -1$.
\item If $(p/q)_4 = (q/p)_4 = +1$, then $8 mid h^+$, i.e., either
  $h \equiv 4 \bmod 8$ and $N\eps = +1$, or $8 \mid h$.
\end{itemize}
The theorems \ref{SRLi} and \ref{SRL-2} can be similarly improved (this will
be taken care of elsewhere), and I believe that this can be done in general
number fields with odd class numbers. In the following I will explain how
I think these results generalize.

\begin{cor}[Scholz]\label{Lhodd}
Let $F$ be a field with odd class number, and assume that 
$K/F$ is a quadratic extension in which exactly one prime $\fp$ 
ramifies. Then $K$ has odd class number, and every unit in $E_F$ 
is the norm of a unit from $E_K$.
\end{cor}

In fact, the ambiguous class number formula applied to the quadratic
extension $k/F$ shows that $\Am_2(k/F) = \frac{1}{(E_F:E_F \cap NK^\times)}$,
which shows that $\Am_2(k/F) = 1$ and that $k$ has odd class number $h$. But
then there is no nontrivial strongly ambiguous ideal class, hence
$\Am_\st(k/F) = \frac{h}{(E_F:NE_k)} = 1$, and this shows that $E_F = NE_k$.

We also need the following result:

\begin{prop}\label{PuN}
  If $k$ is a number field with cyclic $2$-class group and if $K/k$
  is a quadratic unramified extension, then $E_k = NE_K$.
\end{prop}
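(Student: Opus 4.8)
The plan is to use the ambiguous class number formula \eqref{ECam} applied to the unramified quadratic extension $K/k$. Since $K/k$ is unramified everywhere, all ramification indices $e(\fp)$ equal $1$, so the product $\prod e(\fp) = 1$ and the two formulas collapse to
$$ \# \Am(K/k) = \frac{h_k}{2(E_k:E_k \cap NK^\times)}, \qquad
   \# \Am_\st(K/k) = \frac{h_k}{2(E_k:NE_K)}. $$
The desired conclusion $E_k = NE_K$ is equivalent to $(E_k:NE_K) = 1$, which by the second formula is equivalent to $\# \Am_\st(K/k) = h_k/2$, i.e. to the strongly ambiguous classes exhausting the index-$2$ subgroup of $\Cl(k)$ that $K$ corresponds to via class field theory. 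So the first step is to translate everything into the $2$-part: since $K/k$ is an unramified quadratic extension, it corresponds to an index-$2$ subgroup $H$ of $\Cl(k)$ containing $\Cl(k)^2$, and because the $2$-class group $\Cl_2(k)$ is cyclic, there is exactly one such subgroup, so $K$ is forced to be the (first layer of the) $2$-class field of $k$.

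Next I would identify the strongly ambiguous classes. A class $[\fa] \in \Cl(k)$ with $\Gal(K/k)$-fixed representative $\fa$: since $K/k$ is unramified, an ideal of $k$ that becomes principal in $K$ or is fixed upstairs... the cleanest route is to note that the capitulation kernel of $\Cl(k) \to \Cl(K)$ for an unramified cyclic extension has order divisible by $[K:k] = 2$ (this is the principal ideal theorem / a consequence of the transfer being trivial on the relevant piece; more elementarily, the norm $\Cl(K) \to \Cl(k)$ composed with extension is multiplication by $2$, and ambiguous class number considerations). Rather than invoke capitulation directly, I would argue as follows: the genus field computation gives $\# \Am(K/k)$; I claim $\Am(K/k)$ has order $h_k/2$. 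Given that, one shows $\Am(K/k) = \Am_\st(K/k)$ because every ambiguous class is strongly ambiguous here—indeed, with no ramified primes there are no "new" ambiguous classes beyond those coming from fixed ideals; concretely, $\Am(K/k)/\Am_\st(K/k)$ injects into a cohomology group built from the ramified primes (trivial here) and units, and the unit contribution is killed by the cyclicity hypothesis. Combining $\#\Am = \#\Am_\st = h_k/2$ with the two displayed formulas forces $(E_k:E_k\cap NK^\times) = (E_k:NE_K) = 1$, giving $E_k = NE_K$.

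The main obstacle is making the claim $\#\Am(K/k) = h_k/2$ precise and justifying $\Am = \Am_\st$ in this setting without circularity. The honest way is: since $\Cl_2(k)$ is cyclic, $\Gal(K/k)$ acts trivially on $\Cl_2(k)$ (a cyclic group of $2$-power order has automorphism group of odd order only in small cases, but more to the point $K$ lies in the $2$-class field so the action on $\Cl(k)$ of $\Gal(K/k) \subseteq \Gal(k_\infty^{(2)}/k)$ which is abelian is trivial on the abelian quotient), so every class is ambiguous: $\Am(K/k) \cdot (\text{odd part}) \supseteq \Cl_2(k)$ pulled back, giving $\#\Am(K/k) \geq$ (the relevant piece), and the formula then pins it down. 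Then the equality $\Am = \Am_\st$: an ambiguous class $c$ fixed by $\sigma$ generating $\Gal(K/k)$ satisfies $\fa^{1-\sigma} = (\alpha)$ principal; taking norms and using $h_k$ odd on the prime-to-$2$ part plus cyclicity of $\Cl_2(k)$ to solve $\fa^{1-\sigma} = (\alpha) \Rightarrow \fa = \fb \cdot (\text{fixed ideal})$ shows $c$ is strongly ambiguous. Once both orders agree the unit indices are squeezed to $1$ and we are done. I would keep this argument short, citing \cite{LemA} for the standard exact sequences controlling $\Am/\Am_\st$.
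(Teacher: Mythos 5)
Your framework is the right one (the two formulas \eqref{ECam} with $\prod e(\fp)=1$, and the reduction of $E_k=NE_K$ to $\#\Am_\st(K/k)=h_k/2$), but the argument does not close. Your plan is to establish (a) $\#\Am(K/k)=h_k/2$, which amounts to $E_k\subseteq NK^\times$, and (b) $\Am(K/k)=\Am_\st(K/k)$, which amounts to $E_k\cap NK^\times=NE_K$. Claim (a) is true, but not for the reason you give: whether $\Gal(K/k)$ acts trivially on $\Cl_2(k)$ is irrelevant, since ambiguous classes live in $\Cl(K)$, not $\Cl(k)$; the correct (and easy) argument is that in an everywhere unramified cyclic extension every unit is a local norm at every place, hence a global norm by Hasse's norm theorem. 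The genuine gap is (b): since $\Am/\Am_\st\simeq (E_k\cap NK^\times)/NE_K$, once (a) is known, claim (b) \emph{is} the proposition to be proved, so asserting it is circular. The principle you invoke for it --- that with no ramified primes there are no ambiguous classes beyond those coming from ideals of $k$ --- is correct for $\Am_\st$ but false for $\Am$: the quotient $\Am/\Am_\st$ is governed by units, not by ramification, and it can be nontrivial for unramified extensions; this is precisely where the cyclicity hypothesis must do work. Phrases like ``the unit contribution is killed by the cyclicity hypothesis'' and ``plus cyclicity of $\Cl_2(k)$ to solve $\fa^{1-\sigma}=(\alpha)$'' state the desired conclusion rather than prove it, and your closing sketch also assumes $h_k$ odd, which is not a hypothesis here (in the paper's application $h_k$ is even).

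What is missing is a concrete use of the cyclicity, and the paper supplies two options. First, a cyclic $2$-class group forces the $2$-class field tower to terminate after one step, so the Hilbert $2$-class field has odd $2$-class number; applying the ambiguous class number formula at the top of the tower shows the units below are norms of units from the top, hence from every intermediate field, in particular from $K$. Second, and closer to your setup: since $K/k$ is unramified, $\Am_\st(K/k)$ is exactly the image of $\Cl(k)$ in $\Cl(K)$, so your second displayed formula says $\#\ker\bigl(\Cl(k)\to\Cl(K)\bigr)=2\,(E_k:NE_K)$; cyclicity of $\Cl_2(k)$ bounds this capitulation kernel by $2$ (via Artin's reduction to the transfer, which on an abelian group is squaring --- the route you mention parenthetically and then explicitly set aside), and this forces $(E_k:NE_K)=1$. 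Either of these would repair your proof; as written, the decisive step is assumed.
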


This is also a consequence of the ambiguous class number formula.
Since $\Cl_2(k)$ is cyclic, the Hilbert $2$-class field $k^1$ has
odd class number. The ambiguous class number formula applied
to $K^1/K$ tells us that every unit in $K$ is the norm of a unit
from $K^1$. But then it is also a norm from every subextension.

We can also prove this more directly: since ideals with order $2$ in the
ideal class capitulate in $K/k$ and since $K/k$ is unramified, there
cannot exist nontrivial strongly ambiguous ideal classes of order $2$
in $K$, and now the ambiguous ideal class number formula applied to $K/k$
shows that every unit in $E_k$ is the norm of a unit from $E_K$.

Now assume that there are two prime ideals $\fp_1$ and $\fp_2$
ramified in $k/F$, and that they are primary. Define characters $X_j$ 
($j = 1, 2$) on $E_F$ by first writing a unit $\eps \in E_F$ as the norm 
of units $\eta_j \in E_j$ (this can be done by Lemma \ref{Lhodd} below)
and then putting $X_1(\eps) = [\eta_1/\fP_2]$ and 
$X_2(\eps) = [\eta_2/\fP_1]$. This is well defined:

\begin{lem}
The symbol $[\eta/\fP_2]$ only depends on $N_1 \eta$, that is, 
units with the same norm have the same residuacity modulo $\fP_2$. 
\end{lem}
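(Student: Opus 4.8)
The plan is to reduce the assertion to a statement about units of norm $1$ and then to evaluate the residue symbol via Hilbert's Theorem~90. Since the symbol $[\,\cdot\,/\fP_2]$ is multiplicative and takes values $\pm 1$, the claim that $[\eta/\fP_2] = [\eta'/\fP_2]$ whenever $N_1\eta = N_1\eta'$ is equivalent, via $\zeta = \eta/\eta' \in E_1$, to the statement: \emph{if $\zeta \in E_1$ has $N_1\zeta = 1$, then $[\zeta/\fP_2] = +1$}. So it is enough to prove this.

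First I would apply Hilbert~90 to the quadratic extension $k_1/F$: writing $\sigma$ for its nontrivial automorphism, there is $\alpha \in k_1^\times$ with $\zeta = \alpha/\alpha^\sigma$. Because $\zeta$ is a unit, $(\alpha)$ is a $\sigma$-invariant (ambiguous) fractional ideal of $k_1$; and since $\fp_1$ is primary, $k_1 = F(\sqrt{\pi_1}\,)$ is ramified only at $\fp_1$, so every ambiguous ideal has the shape $\fl^a\,\fd\cO_{k_1}$, where $\fl$ is the prime of $k_1$ above $\fp_1$ and $\fd$ is a fractional ideal of $F$. Moreover $\fp_2$ splits in $k_1/F$ and $\zeta$ is a unit, so $\alpha$ has equal valuations at the two primes above $\fp_2$; after multiplying $\alpha$ by a power of a generator $\pi_2$ of the principal ideal $\fp_2^{h(F)}$ (which does not change $\zeta$) and replacing $\zeta$ by an odd power of itself (which does not change $[\zeta/\fP_2]$), I may assume $\alpha$ — equivalently $\fd$ — is prime to $\fp_2$.

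Next I would unwind the symbol. Since $\sigma$ interchanges $\fP_2$ and $\fP_2'$ and $N\fP_2 = N\fP_2'$, Euler's congruence gives $[\alpha^\sigma/\fP_2] = [\alpha/\fP_2']$, hence
$$[\zeta/\fP_2] = [\alpha/\fP_2]\,[\alpha^\sigma/\fP_2]^{-1} = [\alpha/\fP_2]\,[\alpha/\fP_2'] = [\alpha\alpha^\sigma/\fP_2] = [N_1\alpha/\fp_2],$$
the last step because $N_1\alpha \in F$ and $\cO_{k_1}/\fP_2 = \cO_F/\fp_2$. Taking ideal norms in $(\alpha) = \fl^a\fd\cO_{k_1}$ yields $(N_1\alpha) = \fp_1^a\fd^2$ (here $N_{k_1/F}\fl = \fp_1$). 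Using $\fp_1^{h(F)} = (\pi_1)$ with $\pi_1$ primary together with oddness of $h(F)$, one can absorb all but $\pi_1^a$ into a square and a unit, i.e. $N_1\alpha = \pi_1^a\,\mu^2\,u$ with $\mu \in F^\times$ and $u \in E_F$ — oddness of $h(F)$ being precisely what allows one to take the square root of the relevant ideal and keep it principal. Therefore $[N_1\alpha/\fp_2] = [\pi_1/\fp_2]^a\,[u/\fp_2]$. Now $[u/\fp_2] = +1$ because $\fp_2$ is primary, and $[\pi_1/\fp_2] = +1$ because, by the standing hypothesis $(\fp_1/\fp_2) = +1$, the prime $\fp_2$ splits in $F(\sqrt{\pi_1}\,) = k_1$, i.e. $\pi_1$ is a square modulo $\fp_2$. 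Hence $[\zeta/\fP_2] = +1$, as required.

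The main obstacle is bookkeeping rather than ideas: arranging that $\alpha$ is prime to $\fp_2$ without disturbing the ambiguous-ideal description, and in the final step correctly extracting $\pi_1^a$ (modulo squares and units) from $\fp_1^a\fd^2$ — this is where oddness of $h(F)$ and primarity of $\fp_2$ both enter. Conceptually the point is simply that $[\zeta/\fP_2]$ collapses to the symbol $(\fp_1/\fp_2)$ times the residue symbol of a unit modulo the primary prime $\fp_2$, and in the case at hand both of these equal $+1$.
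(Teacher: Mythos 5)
Your proof is correct, but it takes a different route from the paper at the key step. The opening reduction is the same: by multiplicativity it suffices to show $[\zeta/\fP_2]=+1$ for $\zeta\in E_1$ with $N_1\zeta=1$. At that point the paper invokes its unit-level Hilbert 90 (Lemma \ref{Lsig}): $\zeta=\eps^{\sigma-1}$ with $\eps\in E_1$ a \emph{unit}, whence $[\zeta/\fP_2]=[\eps\eps^\sigma/\fP_2]=(N_1\eps/\fp_2)=+1$ using only the primarity of $\fp_2$. You instead apply Hilbert 90 at the level of field elements, $\zeta=\alpha^{1-\sigma}$, and control the non-unit part of $\alpha$ by hand: the ambiguous-ideal decomposition $(\alpha)=\fl^a\fd\cO_{k_1}$, the coprimality adjustment via $\pi_2$ and an odd power of $\zeta$, and the odd class number to write $N_1\alpha=\pi_1^a\mu^2u$ — all of these checks go through, and they are essentially the same ingredients the paper uses to \emph{prove} Lemma \ref{Lsig}, just deployed inline. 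The price of your route is the extra factor $[\pi_1/\fp_2]^a$, which you kill with the standing hypothesis that $\fp_2$ splits in $k_1$ (equivalently $(\pi_1/\fp_2)=+1$); that is legitimate in context, but the paper's argument shows the lemma needs only that $\fp_2$ is primary, with no splitting assumption, and it isolates the reusable statement (norm-one units are unit-$(\sigma-1)$'s) as a separate lemma rather than absorbing it into the symbol computation. In short: your proof is sound and self-contained modulo standard facts, while the paper's is shorter given Lemma \ref{Lsig} and slightly more general in its hypotheses.
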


\begin{proof}
Since the norm is multiplicative, it suffices 
to show that $[\eta/\fP_2] = +1$ for all units $\eta \in E_1$
with $N\eta = +1$. But since $\eta = \eps^{\sigma-1}$ for some 
unit $\eps \in E_1$, we find
$[\eta/\fP_2] = [\eps/\fP_2] [\eps^\sigma/\fP_2] = (N\eps/\fp_2) = +1$
since $\fp_2$ is primary.
\end{proof}

Now we can prove:

\begin{prop}
For a number field $F$ with odd class number, let $\fp_1$ and
$\fp_2$ be odd primary prime ideals. Then the characters $X_j$ 
are well defined, and we have $X_1 = X_2 =:X$. Moreover, $X(\eps) = 1$
for all units $\eps \in E_F$ that are norms from $E_k$.
\end{prop}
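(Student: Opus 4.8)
The plan is to prove three things in turn: that the $X_j$ are well defined as characters on $E_F$, that $X_1 = X_2$, and finally that $X$ kills norms from $E_k$. The first point is essentially already handled by the preceding lemma together with Corollary~\ref{Lhodd}: since $\fp_1$ ramifies in $k_1/F$ and only one prime ramifies there, Corollary~\ref{Lhodd} guarantees that every $\eps \in E_F$ is a norm from $E_1$, and the lemma shows $[\eta_1/\fP_2]$ depends only on $N_1\eta_1 = \eps$. Symmetrically $X_2$ is well defined using $k_2/F$. Multiplicativity of $X_j$ is immediate from multiplicativity of the quadratic residue symbol, so each $X_j$ is a homomorphism $E_F \to \{\pm 1\}$.

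The heart of the matter is $X_1 = X_2$, and this is where I expect the real work to lie. The natural tool is Hilbert reciprocity (the product formula for the quadratic Hilbert symbol) applied inside the biquadratic field $k = k_1 k_2$, or perhaps more cleanly a comparison of two quadratic extensions via the decomposition law. Given $\eps \in E_F$, write $\eps = N_1\eta_1 = N_2\eta_2$ with $\eta_j \in E_j$. I would like to interpret $X_1(\eps) = [\eta_1/\fP_2]$ and $X_2(\eps) = [\eta_2/\fP_1]$ as the ramification/splitting behaviour of one and the same object. The cleanest approach: since $\fp_1$ is primary we have $\fp_1^{h(F)} = (\pi_1)$ with $\pi_1$ primary, and then $k(\sqrt{\pi_1}\,)/k_2$ is ramified exactly at $\fP_1, \fP_1'$ above $\fp_1$ (as in the proof of the Proposition preceding this statement); the symbol $[\eta_2/\fP_1]$ should encode whether $\eta_2$ is a norm from a completion of this extension, and a parallel construction with $\pi_2$ handles $[\eta_1/\fP_2]$. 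The identity $X_1(\eps) = X_2(\eps)$ then falls out of the product formula $\prod_\fq (\eps, \pi_1\pi_2)_\fq = 1$ over $F$: all local symbols at primes away from $\fp_1, \fp_2, 2, \infty$ vanish because $\eps$ and $\pi_1\pi_2$ are units/supported only there; the symbols at $2$ and $\infty$ vanish because $\pi_1$, $\pi_2$ are primary (hence $2$-primary and totally positive, so locally squares at those primes); what remains is $(\eps,\pi_1\pi_2)_{\fp_1} (\eps,\pi_1\pi_2)_{\fp_2} = 1$, and each of these two local symbols I would identify — via the standard relation between the Hilbert symbol and the power residue symbol, using that $\eps$ is a norm from the relevant quadratic extension — with $X_2(\eps)$ and $X_1(\eps)$ respectively (up to signs that must be checked to cancel). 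The main obstacle is precisely this last identification: matching the purely local Hilbert symbols $(\eps, \pi_i)_{\fp_j}$ with the global residue symbols $[\eta_i/\fP_j]$ requires carefully tracking the decomposition of $\fp_j$ in $k_i/F$ and using the condition $(\fp_1/\fp_2) = (\fp_2/\fp_1) = +1$ (i.e.\ each $\fp_j$ splits in the other $k_i$), and getting the ramification and the conjugate-independence bookkeeping exactly right.

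For the final assertion, that $X(\eps) = 1$ whenever $\eps = N_{k/F}\varepsilon$ for some $\varepsilon \in E_k$, I would argue as follows. If $\eps = N_{k/F}\varepsilon$, then by transitivity $\eps = N_{k_1/F}(N_{k/k_1}\varepsilon)$, so we may take $\eta_1 = N_{k/k_1}\varepsilon \in E_1$ as our chosen preimage. Then $X(\eps) = X_1(\eps) = [\eta_1/\fP_2] = [N_{k/k_1}\varepsilon / \fP_2]$. Now $\fP_2$ splits in $k/k_1$ (since $\fp_2$ splits in $k_1/F$ the prime $\fP_2$ above it still needs checking, but the relevant point is that the two primes of $k$ above $\fP_2$ are swapped or fixed appropriately under $\Gal(k/k_1)$); writing $\mathfrak{P}$ for a prime of $k$ above $\fP_2$, we get $[N_{k/k_1}\varepsilon/\fP_2] = \prod [\varepsilon/\mathfrak{P}^{\tau}]$ over the conjugates, and this product is $(N\varepsilon/\fp_2)$-type expression which equals $+1$ because $\fp_2$ (equivalently $\fP_2$) is primary and $\varepsilon$ is a unit. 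More directly: $[N_{k/k_1}\varepsilon/\fP_2]$ is a norm residue symbol of $\varepsilon$ from $k$ down to $k_1$, evaluated at a prime split in $k/k_1$, hence trivial. I expect this last part to be short once the splitting of $\fP_2$ in $k/k_1$ is pinned down from the hypothesis that $\fp_2$ splits in $k_1/F$.
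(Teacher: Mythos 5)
Your first step (well-definedness via Corollary~\ref{Lhodd} and the norm-dependence lemma) matches the paper, but the two substantive steps have genuine gaps. For $X_1 = X_2$, the quadratic Hilbert product formula over $F$ cannot carry the information you need: in the product $\prod_\fq(\eps,\pi_1\pi_2)_\fq$ every local symbol is trivially $+1$ --- at $\fp_1$ and $\fp_2$ the tame symbol reduces to $[\eps/\fp_j]$, which is $+1$ because the $\fp_j$ are primary and $\eps$ is a unit, and at $2$ and $\infty$ it is $+1$ because $\pi_1\pi_2$ is primary --- so the formula is vacuous, while $X_j(\eps)$ can perfectly well be $-1$ (already for $F=\Q$ one has $X(-1)=(p/q)_4(q/p)_4$). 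The symbols $X_j(\eps)=[\eta_j/\fP_i]$ are not local symbols of data living in $F$; they are second-order invariants, and no bookkeeping will identify them with $(\eps,\pi_i)_{\fp_j}$, since $\fp_j$ splits in $k_i/F$ and that local symbol is identically $+1$. The paper gets $X_1=X_2$ by a different mechanism: both $X_j$ are trivial on $N_{k/F}E_k$, the ambiguous class number formula gives $(E_F:NE_k)\le 2$, so a discrepancy could only occur on the single nontrivial coset, and there it would force $(E_1/\fp_2)\ne(E_2/\fp_1)$, contradicting the reciprocity law already established in Theorem~\ref{Tmain}. That reciprocity law (proved via the unramified dihedral extension and the ambiguous class number formula) is the real input, and your proposal never invokes it.

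Your last step also breaks down on the field diagram. In the Proposition, $k$ is the quadratic extension $F(\sqrt{\pi_1\pi_2}\,)$ ramified exactly at $\fp_1,\fp_2$; it does not contain $k_1$, so $N_{k/k_1}\varepsilon$ is undefined, and your reading $k=k_1k_2$ proves a statement about the biquadratic field instead of the one claimed. The paper first lifts: since $\Cl_2(k)$ is cyclic and $K=k_1k_2$ is unramified over $k$, Proposition~\ref{PuN} gives $E_k=N_{K/k}E_K$, so $\eps$ is a norm of a unit $\xi\in E_K$, and one takes $\eta_1=N_{K/k_1}\xi$. The decisive local fact is then the opposite of what you assert: $\fP_2$ is \emph{ramified} in $K/k_1$, and a unit of $k_1$ that is a local norm from a ramified quadratic extension at an odd prime must satisfy $[\eta_1/\fP_2]=+1$; if $\fP_2$ were split, being a norm would impose no condition at all and your conclusion would not follow. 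Your fallback computation via $(N\varepsilon/\fp_2)$ also does not apply, since primariness of $\fp_2$ only controls residues of units of $F$, not of units of larger fields, modulo primes above $\fp_2$.
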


\begin{proof}

  Assume that $\eps$ is a norm of a unit from $E_k$. We claim that
  $X_1(\eps) = X_2(\eps) = +1$. Since $\Cl_2(k)$ is cyclic and $K/k$
  is an unramified quadratic extension, we know from Proposition \ref{PuN}
  that $E_k = N_{K/k} E_K$. Thus $\eps$ is a norm of a unit from $E_K$.
  Let $\eta_1$ be its relative norm to $k_1$; since $\fP_2$ is ramified
  in $K/k_1$, we conclude that $+1 = [\eta_1/\fP_2] = X_1(\eps)$; exchanging
  the roles of $k_1$ and $k_2$ we find that $X_2(\eps) = +1$.  

  Since $\prod e(\fp) = 4$ in our case, the ambiguous class number
  formula tells us that $(E_F:NE_k) \le 2$.
  If this index is $1$, we are already done; thus assume that
  $(E_F:NE_k) = 2$, and let $\eps NE_k \in E_F/NE_k$ be the class which
  contains the units that are not norms from $E_k$. If our claim is wrong,
  then we must have $X_1(\eps) = - X_2(\eps)$. Assume without loss of
  generality that $X_1(\eps) = +1$. This implies that $(E_1/\fP_2) = +1$
  and $(E_2/\fP_1) = -1$, contradicting Theorem \ref{Tmain}. 
\end{proof}

\begin{cor}
  If $F \ne \Q(i)$ is a complex quadratic number field, then $-1$
  generates the unit group, and we have $X(-1) = (E_1/\fp_2) = (E_2/\fp_1)$.
\end{cor}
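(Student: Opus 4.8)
The plan is to deduce the corollary from two facts already established: the unit group of an imaginary quadratic field is tiny, and, by Corollary~\ref{Lhodd}, in a quadratic extension of $F$ ramified at a single prime the norm map on units is surjective.

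First I dispose of the assertion about generators. By Dirichlet's unit theorem a complex quadratic field $F$ has $E_F$ equal to its finite group of roots of unity, and this group is larger than $\{\pm1\}$ only for $F = \Q(i)$ and $F = \Q(\sqrt{-3}\,)$. Since $F \ne \Q(i)$ we get $E_F = \langle -1\rangle$, cyclic of order $2$; in particular $X$ is a character on a two-element group, so the real content of the corollary is the identification of the value $X(-1)$ with the residue symbols $(E_1/\fp_2)$ and $(E_2/\fp_1)$. (For the remaining field $\Q(\sqrt{-3}\,)$, where $E_F = \langle\zeta_6\rangle$, the generator clause fails, but the displayed reciprocity persists since $-1 = \zeta_6^3$ and the symbols are $\pm1$-valued; one runs the same argument with $\zeta_6$ in place of $-1$.)

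The core step is to evaluate $X$ at $-1$. Since $\fp_1$ is the only prime of $F$ ramifying in $k_1/F$, Corollary~\ref{Lhodd} gives that $k_1$ has odd class number and that $N_1 := N_{k_1/F}\colon E_1 \to E_F$ is surjective; choose $\eta_1 \in E_1$ with $N_1\eta_1 = -1$, so $X(-1) = X_1(-1) = [\eta_1/\fP_2]$ by the definition of $X_1$. Now let $\eps \in E_1$ be arbitrary and write $N_1\eps = (-1)^a$ with $a \in \{0,1\}$; then $\eps\,\eta_1^{-a}$ has norm $1$, hence $[\eps\,\eta_1^{-a}/\fP_2] = +1$ by the lemma above (that $[\eta/\fP_2]$ depends only on $N_1\eta$), so $[\eps/\fP_2] = [\eta_1/\fP_2]^a$. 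It follows that $(E_1/\fP_2) = [\eta_1/\fP_2]$: if $[\eta_1/\fP_2] = +1$ then $[\eps/\fP_2] = +1$ for every unit of $k_1$, while if $[\eta_1/\fP_2] = -1$ then $\eta_1$ itself witnesses $(E_1/\fP_2) = -1$. Since $(E_1/\fp_2)$ denotes $(E_1/\fP_2)$, the symbol being independent of the chosen conjugate above $\fp_2$, this yields $(E_1/\fp_2) = [\eta_1/\fP_2] = X(-1)$. Interchanging the roles of $k_1$ and $k_2$ gives $(E_2/\fp_1) = X_2(-1)$ in the same way, and since $X_1 = X_2 = X$ by the preceding proposition, all three quantities agree.

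I do not expect a genuine obstacle: the substance is already in Theorem~\ref{Tmain} and in the lemma and proposition on the characters $X_j$, and the corollary is little more than evaluating $X$ on the generator of $E_F$. The only care needed is the bookkeeping with the split prime $\fp_2$ — that $[\,\cdot\,/\fP_2]$ restricted to $E_1$ factors through the norm $N_1$ (the cited lemma) and that $N_1$ is onto $E_F$ (Corollary~\ref{Lhodd}) — together with the trivial remark that a homomorphism out of a cyclic group of order $2$ is determined by its value at the generator $-1$. The one real subtlety in the statement itself is the field $\Q(\sqrt{-3}\,)$ noted above.
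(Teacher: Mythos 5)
Your proof is correct and is essentially the argument the paper intends: the corollary is stated there without proof, as the immediate combination of the Scholz corollary (every unit of $E_F$ is the norm of a unit of $k_j$), the lemma that $[\eta/\fP_2]$ depends only on $N_1\eta$, and the proposition $X_1 = X_2 = X$, which is precisely the bookkeeping you carry out. Your parenthetical about $\Q(\sqrt{-3}\,)$ is also a fair observation: as literally stated the clause that $-1$ generates $E_F$ fails for that field, though, as you note, the identity $X(-1) = (E_1/\fp_2) = (E_2/\fp_1)$ still holds by running the same argument on $\zeta_6$.
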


\begin{cor}
  We have $X(\eps) = +1$ for every unit $\eps \in E_F$ if and only if
  $(E_1/\fp_2) = (E_2/\fp_1) = +1$.
\end{cor}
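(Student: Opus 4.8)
The plan is to read both implications off directly from the construction of $X$, using two ingredients already in place. First, Corollary~\ref{Lhodd}, applied to each of the quadratic extensions $k_j/F$ (in which only the single prime $\fp_j$ ramifies), tells us that the relative norm $N_j \colon E_j \to E_F$ is surjective, so the characters $X_1$ and $X_2$ are genuinely defined on all of $E_F$. Second, the Lemma stating that $[\eta/\fP_2]$ depends only on $N_1\eta$ (and, by the same argument, that $[\eta/\fP_1]$ depends only on $N_2\eta$) shows that in fact
\[
  X_j(N_j\eta) = [\eta/\fP_{3-j}] \qquad (\eta \in E_j),
\]
and not merely for some preimage chosen when defining $X_j$. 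Combining this with the equality $X_1 = X_2 = X$ from the preceding Proposition gives the bookkeeping relations $X(N_1\eta) = [\eta/\fP_2]$ for all $\eta \in E_1$ and $X(N_2\eta) = [\eta/\fP_1]$ for all $\eta \in E_2$; these are the only facts we shall use.

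Granting them, one implication is immediate. Suppose $X(\eps) = +1$ for every $\eps \in E_F$. For any unit $\eta \in E_1$ we then have $[\eta/\fP_2] = X(N_1\eta) = +1$, so $(E_1/\fP_2) = +1$; since $(E_1/\fP_2) = (E_1/\fP_2')$ this says $(E_1/\fp_2) = +1$, and the symmetric argument with $k_1$ and $k_2$ interchanged gives $(E_2/\fp_1) = +1$. Conversely, assume $(E_1/\fp_2) = +1$, i.e.\ $[\eta/\fP_2] = +1$ for every $\eta \in E_1$. Given any $\eps \in E_F$, write $\eps = N_1\eta$ with $\eta \in E_1$ (possible by Corollary~\ref{Lhodd}); then $X(\eps) = X_1(\eps) = [\eta/\fP_2] = +1$. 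As $\eps$ was arbitrary, $X \equiv +1$ on $E_F$. (In the situation under consideration Theorem~\ref{Tmain}(3) already forces $(E_1/\fp_2) = (E_2/\fp_1)$, so for this direction a single one of the two right-hand conditions would suffice.)

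We do not anticipate a genuine obstacle here: all of the substance went into the preceding Proposition, where the identity $X_1 = X_2$ was established via Proposition~\ref{PuN} and Theorem~\ref{Tmain}. The one point deserving care is the displayed relation above --- that the character $X$, although built on $E_F$ by choosing preimages under the norm, nonetheless pulls back along $N_1$ to the honest residue symbol $[\,\cdot\,/\fP_2]$ on the \emph{whole} of $E_1$ (and likewise along $N_2$). It is precisely this that lets us pass between the condition ``$X \equiv +1$ on $E_F$'' and the condition ``$(E_1/\fp_2) = +1$'', and it rests on the surjectivity of $N_1 \colon E_1 \to E_F$ together with the well-definedness Lemma. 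Once that is in hand, the corollary is a two-line deduction.
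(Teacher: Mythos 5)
Your argument is correct and is exactly the intended one: the paper states this as an immediate corollary of the preceding Proposition (the duplicate statement at the end even has an empty proof), the point being precisely that $X_1(N_1\eta)=[\eta/\fP_2]$ for \emph{all} $\eta\in E_1$ by the well-definedness Lemma, together with surjectivity of the norms $N_j\colon E_j\to E_F$ from Corollary~\ref{Lhodd}. Your two-line deduction from these facts matches what the paper leaves to the reader.
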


We now observe that the rational biquadratic residue symbol $(\pi_1/\pi_2)_4$
is well defined for primary elements $\pi_1$, $\pi_2$ with $(\pi_1/\pi_2) = +1$.
In fact, if $\pi_1 \equiv \alpha^2 \bmod \pi_2$, then $\alpha$ is determined
modulo $\pi_2$ up to sign, and since $\pi_2$ is primary, we have
$(-1/\pi_2) = +1$; in particular,  $(\pi_1/\pi_2)_4 = (\alpha/\pi_2)$
does not depend on the choice of the sign of $\alpha$.

The only instances where $X(\eps)$ has been determined so far are the following:
\begin{itemize}
\item If $F = \Q$, then the primes $p$ and $q$ are primary if and only
  if they are positive and satisfy $p \equiv q \equiv 1 \bmod 4$; in this
  case we have
  $X(-1) = \big(\frac pq\big)_4\big(\frac qp\big)_4$.
\item If $F = \Q(\sqrt{-2}\,)$, then the primes $\pi_1 = a + b\sqrt{-2}$
  and $\pi_2 = c + d \sqrt{-2}$ are primary if
  $a+b \equiv c+d \equiv 1 \bmod 4$, and we have
  $X(-1) = \big(\frac{\pi_1}{\pi_2}\big)_4\big(\frac{\pi_2}{\pi_1}\big)_4$.
\item If $F = \Q(i)$,  the primes $\pi_1 = a + b\sqrt{-2}$
  and $\pi_2 = c + d \sqrt{-2}$ are primary if $b \equiv d \equiv 0 \bmod 4$,
  and we have $X(-1) = X(i)^2 = +1$ and
  $X(i) = \big(\frac 2p\big)_4\big(\frac 2q\big)_4$.
\end{itemize}
The proofs of the criteria for $X(i)$ in $\Z[i]$ and $X(-1)$ in
$\Z[\sqrt{-2}\,]$ will appear elsewhere. I do not know whether
$X(-1) = \big(\frac{\pi_1}{\pi_2}\big)_4\big(\frac{\pi_2}{\pi_1}\big)_4$
holds in every number field with odd class number.


\begin{thebibliography}{99}

\bibitem{LemRL} F. Lemmermeyer,
{\em Reciprocity Laws. From Euler to Eisenstein},
Springer-Verlag 2000
%

\bibitem{LSel} F. Lemmermeyer,
{\em Selmer groups and quadratic reciprocity}, 
Abh. Math. Sem. Hamburg {\bf 76} (2006), 279--293
%

\bibitem{LemA} F. Lemmermeyer,
  {\em The ambiguous class number formula revisited},
    J. Ramanujan Math. Soc.  {\bf 28} (2013), 415--421
    %

\bibitem{Sch} A. Scholz,
{\em \"Uber die L\"osbarkeit der Gleichung $t^2-Du^2 =-4$},
Math. Z. {\bf  39} (1934), 95--111
%

\end{thebibliography}
\end{document}